\documentclass[11pt,oneside,reqno]{amsart}
\usepackage{amsthm}
\usepackage{amsrefs}
\usepackage{amssymb}
\usepackage{amsmath}
\usepackage{graphicx}									 %Include graphics
\usepackage{stmaryrd}                  %Includes \mapsfrom
\usepackage{footmisc}                  %Something for footnotes
\usepackage{paralist}
\usepackage{wrapfig}
\usepackage[draft]{pdfcomment}
\usepackage{bbm}
\usepackage[T2A,T1]{fontenc}
\usepackage[utf8]{inputenc}
\usepackage{lmodern}
\usepackage[ngerman,russian,british]{babel}
\usepackage[arrow, matrix, curve]{xy}
\usepackage{xcolor}
\usepackage{capt-of}
\usepackage{enumitem}
\usepackage[utf8]{inputenc}
\usepackage{float}
\usepackage{todonotes}
\usepackage{hyperref}
\usepackage{aliascnt}
\usepackage{enumitem}
\usepackage{tikz-3dplot}
\usetikzlibrary{shapes,backgrounds}
\renewcommand{\phi}{\varphi}
\title[Alexandrov Regions]{Alexandrov Regions}

\author[N. Lebedeva]{Nina Lebedeva\textsuperscript{1}}
\author[A. Nepechiy]{Artem Nepechiy\textsuperscript{2}}

\thanks{%
  \textsuperscript{1}\,Saint Petersburg State University,
  7/9 Universitetskaya nab., St. Petersburg, 199034, Russia;
  St. Petersburg Department of V.~A.~Steklov Institute of Mathematics
  of the Russian Academy of Sciences,
  27 Fontanka nab., St. Petersburg, 191023, Russia.
  Email:
  \href{mailto:lebed@pdmi.ras.ru}{\nolinkurl{lebed@pdmi.ras.ru}}.
}

\thanks{%
  \textsuperscript{2}\,Institute for Algebra and Geometry,
  KIT, Englerstr.~2, 76131 Karlsruhe, Germany.
  Email:
  \href{mailto:artem.nepechiy@kit.edu}{\nolinkurl{artem.nepechiy@kit.edu}}.
}

\makeatletter

\@namedef{subjclassname@2020}{%
  \textup{2020} Mathematics Subject Classification}

\let\original@adminfootnotes\@adminfootnotes
\renewcommand{\@adminfootnotes}{%
  \begingroup
  \long\def\@makefntext##1{%
    \setlength{\parindent}{0pt}%
    \noindent\@makefnmark##1%
  }%
  \original@adminfootnotes
  \endgroup
}

\makeatother

\subjclass[2020]{Primary 53C23; Secondary 51K10}

\keywords{%
  Alexandrov regions,
  Alexandrov spaces,
  curvature bounded below,
  convex neighborhoods,
  semiconcave functions,
  Gromov--Hausdorff convergence}

\newtheorem{rem}{Remark}[section]
\newtheorem{defi}{Definition}[section]

\DeclareMathOperator{\dist}{dist}
\DeclareMathOperator{\cone}{Cone}

\newtheorem*{Main Theorem}{Convex Neighborhood Theorem}

\newtheorem*{Quantitative Main Theorem}{Quantitative Convex Neighborhood Theorem}

\newaliascnt{thm}{defi}
\newtheorem{thm}[thm]{Theorem}

\newtheorem{prop}[thm]{Proposition}

\aliascntresetthe{thm}

\newaliascnt{prop}{defi}
\aliascntresetthe{prop}

\newaliascnt{cor}{defi}
\newtheorem{cor}[cor]{Corollary}
\aliascntresetthe{cor}

\newaliascnt{lem}{defi}
\newtheorem{lem}[lem]{Lemma}
\aliascntresetthe{lem}

\newaliascnt{ex}{defi}

\aliascntresetthe{ex}

\newcommand{\N}{\mathbb{N}}

\newcommand{\R}{\mathbb{R}}
\begin{document}

\begin{abstract}
	We introduce the notion of Alexandrov region. In contrast to Alexandrov spaces these are intrinsic metric spaces, which satisfy a lower curvature bound and have finite Hausdorff dimension, might however, be non-complete. We show a quantitative version of the statement that such regions are locally isometric to an Alexandrov space.
\end{abstract}
	\maketitle

\section{Introduction}
The local lower curvature condition goes back to Alexandrov
\cite{MR87119} and appears in its modern metric form in
\cite{MR1185284}. We use the term \emph{Alexandrov region} for
a locally compact, finite-dimensional intrinsic metric space satisfying
this local comparison condition; see \autoref{Def: Alexandrov region}. Basic examples are
convex open subsets of finite-dimensional $CBB(\kappa)$ spaces. A useful
example beyond the complete setting is the universal cover of
$\mathbb R^2\setminus \lbrace {0} \rbrace $ with the lifted metric; its metric completion
is not locally compact.

Our first result makes precise the fact that the local comparison
condition already provides genuine Alexandrov neighborhoods.

\begin{prop}[Alexandrov region is locally Alexandrov]\label{main theorem}
Let $X^n$ be an Alexandrov region of dimension $n$. Then for every
$p\in X$ there exists $p\in A\subset X$ such that $A$ with the restricted metric is an $n$-dimensional Alexandrov space.
\end{prop}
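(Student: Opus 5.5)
We take $A$ to be a small closed ball $\bar B_r(p)$, or a sublevel set inside it, on which the local comparison condition holds uniformly. Then we show that $A$ is compact, that it is geodesically convex in $X$ (so the restricted metric is intrinsic and comparison passes to $A$), and that it has dimension $n$.

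First, by \autoref{Def: Alexandrov region} and local compactness, we fix $R>0$ such that $\bar B_{4R}(p)$ is compact and the $(1+3)$-point (or quadruple) comparison for curvature $\ge \kappa$ holds for all points of $B_{4R}(p)$. For $r<R$, any two points of $\bar B_r(p)$ are joined by a shortest path in $X$: the intrinsic metric together with compactness of $\bar B_{4R}(p)$ gives this by the usual Hopf--Rinow/Arzel\`a--Ascoli argument, since every near-minimizing curve stays in $B_{3r}(p)$. Comparison for triangles with vertices in $B_r(p)$ then makes sense, and local comparison gives semiconcavity: $f=\mathrm{md}_\kappa\circ \dist_p$ satisfies $f''\le 1-\kappa f$ along geodesics inside $B_{3r}(p)$, in the sense of Perelman--Petrunin.

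The key step, and the expected main obstacle, is producing a \emph{convex} neighborhood. Balls need not be convex; in the universal-cover example, geodesics between points of a ball may leave it. We would use a Perelman-type strictly concave function. Pick points $q_1,\dots,q_N$ on a small sphere $\partial B_\delta(p)$ forming a $\varepsilon\delta$-net, and set
\[ h=\frac1N\sum_{i} \phi(\dist_{q_i}), \]
with $\phi$ concave, increasing and chosen so that $h$ is strictly concave near $p$. The hessian estimate needs comparison only in $B_{3\delta}(p)$, so it is available locally; this is why the argument works in a region rather than a complete space. We then take $A=\{h\ge h(p)-\eta\}$, restricted to the component of $p$ inside $B_\delta(p)$. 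For $\eta$ small this set is compactly contained in $B_\delta(p)$ and contains $p$ in its interior. A shortest path $\gamma$ between two points of $A$ has length $<2\delta$, so it stays in $B_{3\delta}(p)$ where $h$ is concave. Then $h\circ\gamma\ge\min$ of its endpoint values, so $\gamma\subset A$. The delicate points are excluding that $\gamma$ exits the comparison domain before concavity can be applied, and checking that $h$ is concave at all points of $\gamma$ and not just near $p$. We would handle both with a continuity/first-exit argument along $\gamma$, using that the initial segment lies in $B_{3\delta}(p)$.

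Finally, $A$ is compact and convex, hence a complete, locally compact geodesic space. Its triangles are triangles in $X$ contained in the comparison domain, so $A$ satisfies global comparison by the Toponogov globalization theorem (or directly, because $A$ lies in the region where comparison holds). $A$ contains an open set of $X$, so its Hausdorff dimension equals $n$. Hence $A$ is an $n$-dimensional Alexandrov space.
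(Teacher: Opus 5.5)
Your overall strategy is the paper's: take a compact superlevel component of a Perelman-type strictly concave function, restrict comparison to it, and count dimension. However, the step that makes it work is missing.

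\textbf{Missing strict maximum.} You assert that for small $\eta$ the component of $\{h\ge h(p)-\eta\}$ through $p$ is compactly contained in $B_\delta(p)$. This needs $h<h(p)-\eta$ on some sphere $S_s(p)$ lying well inside the region where $h$ is concave, i.e.\ essentially that $p$ be a strict local maximum of $h$. Your construction does not give this. For $v\in\Sigma_p$ one has $d_ph(v)=-\frac{\phi'(\delta)}{N}\sum_i\cos\sphericalangle(v,\uparrow_p^{q_i})$, and for a net of the whole sphere this can be positive of order one, because a net need not be evenly distributed. Already for $\Sigma_p=S^1$, take a maximal $\theta$-separated set with gaps $\theta$ on one half-circle and gaps close to $2\theta$ on the other. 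This gives $d_ph\approx\frac{2}{3\pi}\phi'(\delta)>0$ in the direction of the midpoint of the sparse half. Shrinking $\eta$ does not help: every such component contains the component of $\{h\ge h(p)\}$ through $p$, which extends from $p$ in that direction. The paper closes exactly this gap in Corollary~\ref{maximum}. Clustering the $q_i$ around a prescribed direction $v_j$ (the $\nu$-clause of Lemma~\ref{pi/2}) yields strictly concave $f_j$ with $d_pf_j\approx-\cos\sphericalangle(\cdot,v_j)$. Then $F=\min_j f_j$ over a finite net $\{v_j\}\subset\Sigma_p$ has $d_pF<0$ on $\Sigma_p$, hence a strict local maximum at $p$ by strict concavity.

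\textbf{Concavity only on a small ball.} Your claim that $h$ is concave on $B_{3\delta}(p)$ is false. That ball contains the $q_i$, where $\phi\circ\dist_{q_i}$ with $\phi'>0$ has a convex cone singularity. Strict concavity holds only on a much smaller ball $B_\varepsilon(p)$, for two reasons:
\begin{enumerate}
\item The values of $\dist_{q_i}$ must stay in the interval where $\phi''\le -2M$, which forces $\varepsilon\lesssim 1/M$.
\item You need the uniform nonorthogonality of Lemma~\ref{pi/2}: for every $x$ near $p$ and every direction at $x$, some $\uparrow_x^{q_i}$ makes an angle at least $\xi$ away from $\pi/2$. Your proposal does not address this, and it is the actual geometric input, coming from the packing count $\delta^{-(n-1)}$ versus $\delta^{-(n-2)}$ in $\Sigma_x$.
\end{enumerate}
So the convexity argument has to be run with $A$ inside, say, $B_{\varepsilon/3}(p)$, so that shortest paths between points of $A$ remain in $B_\varepsilon(p)$. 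This again requires the strict maximum at $p$, and no first-exit or continuity argument can replace concavity where the function is not concave.

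Once these two ingredients are supplied, the rest of your argument goes through as in Lemma~\ref{strict-max-convex-neighborhoods}: compactness, convexity, inheritance of comparison, and dimension $n$ because $A$ has nonempty interior.
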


In fact, the neighborhoods in Proposition~\ref{main theorem} can be
chosen arbitrarily small. This is essentially a local consequence of
Perelman's construction of strictly concave functions. We include the
details mainly to justify its use in the non-complete setting and to
verify that the construction can be lifted along Gromov--Hausdorff
convergent sequences.

Proposition~\ref{main theorem} alone gives no uniform control on the
size of such a neighborhood: along a sequence of Alexandrov regions
the available convex neighborhoods may shrink to zero. The main point
of this paper is that this degeneration can be ruled out quantitatively.
More precisely, under a non-collapsing assumption, the size of an
Alexandrov neighborhood admits a uniform lower bound in terms of the
distance to the virtual boundary (points of the metric completion that do not belong to the original space).

The following formulation uses the compactness of $\overline B_r(p)$ as a local substitute for a controlled distance from the virtual boundary, while the volume lower bound prevents collapse.

\begin{thm}[Quantitative convex neighborhood theorem]\label{quantitative theorem}
For every $\varepsilon>0$, $n\in\mathbb N$, and $\kappa\in\mathbb R$ there exists $c=c(\varepsilon,n,\kappa)>0$ with the following property. Let $X^n$ be an Alexandrov region with curvature bounded below by $\kappa$, and let $p\in X$. Suppose that $0<r\le1$, $\overline B_r(p)$ is compact, and

$$
\mathcal H^n(B_{r/2}(p))\ge \varepsilon r^n.
$$

Then there exists an $n$-dimensional Alexandrov space $A$ such that

$$
B_{cr}(p)\subset A\subset B_r(p).
$$

\end{thm}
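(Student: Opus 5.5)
We argue by contradiction and compactness, combined with the construction behind Proposition~\ref{main theorem} made stable under Gromov--Hausdorff convergence. Suppose the statement fails for some $\varepsilon,n,\kappa$. Then there are Alexandrov regions $X_i^n$ with curvature $\ge\kappa$, points $p_i$, radii $0<r_i\le 1$ with $\overline B_{r_i}(p_i)$ compact and $\mathcal H^n(B_{r_i/2}(p_i))\ge\varepsilon r_i^n$, and constants $c_i\to0$, such that no $n$-dimensional Alexandrov space $A$ satisfies $B_{c_ir_i}(p_i)\subset A\subset B_{r_i}(p_i)$. We rescale by $1/r_i$. This replaces $\kappa$ by $\kappa r_i^2\ge\min(\kappa,0)$ and keeps the volume bound. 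We may therefore assume $r_i=1$ with a uniform lower curvature bound $\kappa_0=\min(\kappa,0)$. The balls $\overline B_1(p_i)$ are compact and locally satisfy $CBB(\kappa_0)$ comparison. Moreover, a ball $B_{1/2}(p_i)$ is intrinsic up to scale $1/4$: geodesics between points of $B_{1/4}(p_i)$ stay in $B_{3/4}(p_i)$, and they exist by compactness of $\overline B_1$. Hence Bishop--Gromov type volume bounds apply. They give uniform packing bounds, so Gromov's precompactness yields a subsequence with $\overline B_{1/2}(p_i)\to Y$ in the Gromov--Hausdorff sense, with $p_i\to p_\infty$.

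The next step is to identify the limit. We check that $B_{1/4}(p_\infty)\subset Y$ is an Alexandrov region of curvature $\ge\kappa_0$: the comparison condition passes to limits because it is a closed condition on quadruples of points. We also need it to have dimension exactly $n$. Here we use the volume bound and Burago--Gromov--Perelman volume continuity (or Colding-type non-collapsing in the Alexandrov setting). The bound $\mathcal H^n(B_{1/2}(p_i))\ge\varepsilon$, together with Bishop--Gromov relative volume comparison, gives $\mathcal H^n(B_\rho(q_i))\ge c(\varepsilon,n,\kappa_0)\rho^n$ for $q_i$ near $p_i$ and small $\rho$. So the limit is non-collapsed and $n$-dimensional near $p_\infty$. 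All of this must be done locally, for instance by applying Proposition~\ref{main theorem} inside each $X_i$ at many points and gluing, or more cleanly by working directly with comparison on balls of radius $<1/4$.

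The core step, and the main obstacle, is stability of Perelman's strictly concave function construction under noncollapsed convergence. In $Y$ we take Perelman's function near $p_\infty$. It is built as $f=\min_\alpha \sum_j \phi(\dist(a_{\alpha j},\cdot))$ from finitely many auxiliary points $a_{\alpha j}$ at a small but definite distance $\delta$ from $p_\infty$, with $\phi$ a suitable concave reparametrisation. This $f$ is strictly $(-\lambda)$-concave on $B_{2\delta'}(p_\infty)$ and has a strict maximum at $p_\infty$. We lift the points $a_{\alpha j}$ to $a^i_{\alpha j}\in X_i$ and define $f_i$ by the same formula. Strict concavity of such distance combinations is an open condition under GH convergence, because it is controlled by comparison angles at a fixed scale $\delta$. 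Hence for large $i$ the functions $f_i$ are strictly concave on $B_{\delta'}(p_i)$, with uniform constants, and $f_i\to f$ uniformly.

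We then take $A_i=\{f_i\ge f(p_\infty)-\eta\}\cap B_{\delta'}(p_i)$ for a small fixed $\eta$. Uniform convergence and the strict maximum of $f$ make $A_i$ a compact superlevel set that contains $B_{c'}(p_i)$ for a fixed $c'>0$, lies in $B_{\delta'}(p_i)\subset B_1(p_i)$, and stays away from the boundary of the ball. Strict concavity implies that $A_i$ is convex, so every geodesic between points of $A_i$ stays in $A_i$. This convexity must be justified using only the local existence of geodesics, through a standard first-variation/continuity argument. A convex compact subset of an Alexandrov region, with the restricted metric, is complete and intrinsic, and it inherits global comparison by the globalization theorem. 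Its dimension is $n$ because it contains an open ball. This contradicts $c_i\to0$ and yields $c=c(\varepsilon,n,\kappa)$.

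The most delicate point is the uniform strict concavity of $f_i$. To establish it, I would reduce it to uniform lower bounds on comparison angles $\tilde\angle a^i_{\alpha j}xa^i_{\alpha k}$ for $x$ near $p_i$. These bounds pass to the limit, and Perelman's criterion then applies directly.
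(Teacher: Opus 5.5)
There is a genuine gap at the first analytic step. Your overall architecture matches the paper's: contradiction, rescaling to $r_i=1$, Gromov precompactness, a non-collapsed limit, and lifting Perelman's function.

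In an Alexandrov region the comparison inequality is only assumed on some neighborhood $U_x$ of each point, and the size of $U_x$ is not controlled. For each fixed $i$, compactness of $\overline B_1(p_i)$ gives a positive comparison scale on that ball. Nothing prevents this scale from tending to $0$ as $i\to\infty$. So the step ``$\overline B_1(p_i)$ is compact and locally satisfies $CBB(\kappa_0)$; hence Bishop--Gromov type bounds apply'' does not follow. Volume and packing bounds (via the noncontracting map $\log_p^\kappa$ into the model cone) need comparison on the whole ball in question, i.e.\ a lower bound on the comparison radius independent of $i$.

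You use the same uniformity silently in two further places. First, when you pass comparison to the limit as ``a closed condition on quadruples'': it only survives if it holds on balls of $i$-independent size. Second, in the stability step, which needs uniform working radii at $p_i$. Your proposed remedies do not supply this bound:
\begin{itemize}
\item Proposition~\ref{main theorem} applied at many points gives Alexandrov neighborhoods of uncontrolled size.
\item ``Working directly on balls of radius $<1/4$'' assumes what must be shown.
\item The globalization theorem does not apply to the balls $\overline B_1(p_i)$, since with the restricted metric they are not length spaces.
\end{itemize}

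The missing ingredient is a quantitative, localized globalization. The paper isolates it as Proposition~\ref{Prop: Comparison radius estimate}: if $\overline B_r(p)$ is compact (and $\kappa\le0$), then $\operatorname{cr}_\kappa(p)\ge c\,r$ with $c$ universal. The proof iterates the radius-improvement step extracted from the Key Lemma of the globalization proof (Lemma~\ref{Lem: Key Lemma}). Each application multiplies the comparison scale by $K>1$, at the price of discarding a collar of width $AK^j\rho$. The geometric sum of these widths stays below $1/2$ until the scale reaches $c_0=(K-1)/(2AK)$. Only after this do the following become available: the small-scale net and volume bounds, the local-to-global net bound, precompactness, and the Alexandrov structure of the limit.

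A secondary point: even then, comparison holds only at scale of order $c_0$. So your relative volume comparison from radius $1/2$ down to $\rho$ around $q_i$ is not justified as written. The paper instead combines the uniform upper bound $\mathcal H^n(B_\rho(x))\le C\rho^n$ with $\mathcal H^n(B_{1/2}(p_i))\ge v_0$. This yields a packing lower bound of order $\rho^{-n}$ that passes to the limit and excludes $\dim<n$. Your final step is essentially the paper's Corollary~\ref{noncollapsestab}.
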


The underlying quantitative principle has already been used implicitly, for instance in \cites{unknown,alex2018ricci}; however, we are not aware of an explicit proof in the present generality.

The proof combines a quantitative version of the globalization argument with a compactness argument. The comparison radius is first controlled by iterating a radius-improvement step, with the remaining distance to the boundary serving as a reserve. This gives uniform local packing estimates and hence Gromov--Hausdorff precompactness on a smaller ball. The non-collapsing assumption prevents a drop of dimension in the limit. Perelman's strictly concave function can then be constructed on the limit and lifted back to the approximating spaces, producing convex neighborhoods of uniformly positive size.

This local framework is useful precisely in situations where Alexandrov geometry appears before one has a complete global space. Gromov--Hausdorff limits occur naturally in degeneration and collapsing problems, in rescaling and blow-up arguments, and more generally throughout geometric analysis. Under lower sectional curvature bounds the limiting objects belong to Alexandrov geometry, but the local pieces and intermediate spaces to which one wishes to apply comparison arguments need not themselves be complete. The quantitative theorem provides a uniform scale on which such pieces can nevertheless be replaced by genuine Alexandrov spaces.

Alexandrov regions also arise naturally in more specific limiting and quotient constructions. In particular, they occur in the study of limits of locally homogeneous spaces \cites{MR3984075,pediconi2019compactness} and in constructions related to orbit spaces and singular Riemannian foliations \cite{alex2018ricci}. Similar local questions occur for metric quotients and submetries, where lower curvature bounds are naturally preserved but one often works on open pieces or strata rather than on the whole quotient. Thus the results above allow the usual tools of Alexandrov geometry---comparison arguments, spaces of directions and tangent cones, strainer techniques, and stability methods---to be applied on a controlled neighborhood without first imposing global completeness.

\subsection*{Use of AI assistance} In the preparation of this manuscript, the authors used ChatGPT (OpenAI, GPT-6) for proofreading, correcting typos, and improving linguistic style. The authors take full responsibility for the content of the paper.

\subsection*{Acknowledgements} The second named author would like to thank the Hausdorff Center for Mathematics in Bonn for their support and hospitality. Part of this work was completed during the second author's stay at the Trimester program \emph{Geometric Statistics: Theory, Application, and Computation}.
\section{Alexandrov Region}

	In this section we introduce the notion of an Alexandrov region and establish the local Alexandrov geometry needed below. Most of the constructions are standard for Alexandrov spaces, but their usual proofs are formulated in the complete setting. We therefore check that the relevant arguments are local and can be carried out inside a compact working ball, without using globalization or compactness of the whole space. In particular, we obtain that the space of directions and the tangent space are well defined at every point and are Alexandrov spaces; see \autoref{Cor: Tangent space and space of directions are Alexandrov spaces}.
We assume familiarity with the exposition in \cite{MR1835418}, but include the relevant details for this reason.

There are several definitions of a lower curvature bound. Some of them do not require the space to be geodesic.

	\begin{defi}[Lower curvature bound]\label{Def: Lower curvature bound}
		A metric space $(X,d)$ has a lower curvature bound $\geq \kappa$ if every ${p} \in X$ has a {
		neighborhood $U_p$ containing $p$} such that for all $q;a,b,c \in U_p$ one has

\begin{equation}\label{eq: comparison inequality}
%\label{4pt}
		 \sphericalangle_{\kappa} a{q}b + \sphericalangle_{\kappa} b{q}c + \sphericalangle_{\kappa} c{q}a \leq 2 \pi. 
	\end{equation}

	\end{defi}

We now come to the definition of Alexandrov regions. Compared to an Alexandrov space it is missing the completeness condition. Since we want to exclude the infinite-dimensional case we have to build it into the definition. 
	\begin{defi}[Alexandrov region]\label{Def: Alexandrov region}
		A locally compact, connected, intrinsic metric space $(X,d)$ of finite Hausdorff dimension is called an Alexandrov region if it has a lower curvature bound in the sense of \autoref{Def: Lower curvature bound}.
	\end{defi}

Since completeness and global geodesicity are not assumed, we work inside controlled local neighborhoods. 

\begin{defi}[Working radius]
We say that $r>0$ is a \emph{working radius} at $p\in X$ if
$\overline{B_{6r}(p)}$ is compact and the comparison inequality
\eqref{eq: comparison inequality} holds for every quadruple of points in
$\overline{B_{4r}(p)}$. \par 
A ball $B_r(p)$, where $r$ is a working radius, is called a working ball.
\end{defi}

Every point admits a positive working radius. Moreover, if $r$ is a working
radius at $p$, then any two points in $\overline{B_{2r}(p)}$ can be joined by
a minimizing geodesic contained in $\overline{B_{4r}(p)}$.
Thus, inside a working ball we have both ingredients needed for the
usual local arguments: minimizing geodesics and the comparison
inequality.
We first record a dimensional consequence of this local structure. It
will be used below to identify the dimension of tangent cones.

\begin{lem}[Dimension homogeneity] Let $X$ be an Alexandrov region. Then all nonempty open subsets have the same Hausdorff dimension. \end{lem}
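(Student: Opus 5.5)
The plan is to show that the function $U\mapsto \dim_H U$ on nonempty open sets is locally constant, and then use connectedness of $X$. Fix $p\in X$ and a working radius $r$ at $p$. Inside the working ball we have both minimizing geodesics and the comparison inequality, so the standard Burago--Gromov--Perelman arguments from \cite{MR1835418} apply verbatim. These arguments use only the comparison inequality, the existence of minimizing geodesics, and the compactness of $\overline{B_{6r}(p)}$. They give: the Hausdorff dimension of $X$ equals its topological dimension locally, there are strainers, and so on. The intended route is through \emph{strainer number}. Let $k$ be the maximal $m$ such that some point of $B_r(p)$ admits an $(m,\delta)$-strainer, for a small fixed $\delta$. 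This $k$ is finite because $\dim_H X<\infty$: a point with an $(m,\delta)$-strainer has a neighborhood bi-Lipschitz to an open subset of $\mathbb R^m$ under the strainer map, so its Hausdorff dimension is at least $m$.

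The key steps, in order, are the following.

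(1) If $x$ has an $(m,\delta)$-strainer, then the map $y\mapsto (d(a_i,y))_{i=1}^m$ is open near $x$ and is bi-Lipschitz on a neighborhood of $x$ intersected with a suitable fiber complement. Consequently $\dim_H V\ge m$ for every neighborhood $V$ of $x$. This is the local part of Theorem 9.4 and Lemma 9.5 of \cite{MR1835418}, and it is carried out inside $\overline{B_{4r}(p)}$.

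(2) The set of points admitting an $(m,\delta)$-strainer is open. If the maximal strainer number on some open set $W\subset B_r(p)$ is $k$, then near a point with a maximal $(k,\delta)$-strainer the strainer map is injective: otherwise one could extend the strainer, contradicting maximality. So that neighborhood is bi-Lipschitz to an open subset of $\mathbb R^k$ and has Hausdorff dimension exactly $k$.

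(3) Show that the maximal strainer number is the same on every open subset of a working ball, i.e.\ that $(k,\delta)$-strained points are dense in $B_r(p)$. Given any open $W\subset B_r(p)$ and a point $q\in W$ with a maximal $(k,\delta)$-strainer, the standard ``strainer extension'' argument applies. Along a geodesic from a point of $W$ toward $q$, and using the comparison inequality, one transports strainers into $W$. Alternatively, one argues by the volume/packing characterization: $\dim_H W=\max$ strainer number in $W$. Then if $\dim_H W<\dim_H B_r(p)$, the Bishop-type packing inequality along geodesics emanating from $W$ gives a contradiction. That inequality is the comparison estimate for the gradient-exponential/contraction map $\Phi_t$ toward a point of $W$, which is $t^{-1}\varsigma$-Lipschitz-type and maps $B_r(p)$ into a small neighborhood inside $W$ with Lipschitz-controlled inverse bound, and it pushes dimension. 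Concretely, I would use the radial contraction toward $w\in W$: the map sending $y$ to the point at fraction $t$ along a geodesic $wy$. By comparison this map is $Ct$-co-Lipschitz in the sense $d(\Phi_t y,\Phi_t z)\ge c\,t\,d(y,z)$ on a set of full dimension. This gives $\dim_H W\ge\dim_H B_r(p)$. The reverse inequality is trivial.

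(4) Conclude that $x\mapsto \dim_H(\text{small balls at }x)$ is locally constant, hence constant since $X$ is connected. Any nonempty open set $U$ then contains a small ball, so $\dim_H U$ equals that constant, using monotonicity and countable stability of $\dim_H$ for the upper bound.

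The main obstacle is step (3). The contraction map is not globally defined (geodesics are non-unique) and its lower Lipschitz bound holds only in an averaged/measure sense. I would first try the Burago--Gromov--Perelman route via density of $(k,\delta)$-burst points (their Theorem 10.6), checking that its proof only invokes comparison inside $\overline{B_{4r}(p)}$. Only if that fails would I fall back on the contraction argument.
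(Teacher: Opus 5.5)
Your outer frame is sound: show that the dimension of small balls is locally constant, then finish with connectedness and countable stability as in step (4). The radial contraction you keep as a fallback in step (3) is precisely the paper's proof, namely the local version of \cite[Lemma~10.6.2]{MR1835418}. The problem is the route you want to try first.

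Density of top-rank strained points in an arbitrary open $W$ is not an independent input. Let $k_W$ be the largest strainer rank occurring in $W$. At a $(k_W,\delta)$-strained point of $W$ the strainer map is a bi-Lipschitz chart onto an open subset of $\mathbb R^{k_W}$. To exclude $k_W<k$ you must already know that $W$ has full Hausdorff dimension, which is the statement being proved. In the standard treatment (e.g.\ \cite{MR1835418}) that density is indeed obtained from dimension homogeneity, so quoting it here is circular.

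Your other suggestion, transporting a strainer from a distant point $q$ into $W$ along geodesics, does not work as stated. Comparison only bounds distances between contracted points from below. Keeping comparison angles near $\pi$ at the image of $q$ would also require upper bounds on the distances from that image to the images of the strainer points. A smaller slip is in step (1): a point with a non-maximal $(m,\delta)$-strainer has no neighbourhood bi-Lipschitz to an open subset of $\mathbb R^m$. The bound $\dim_H\ge m$ comes from the strainer map being Lipschitz and open.

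Your reason for demoting the contraction argument is also mistaken, and once corrected that argument proves the lemma on its own. Let $r$ be a working radius at $p$ and let $w\in B_r(p)$. For each $y\in B_r(p)$ choose \emph{any} minimizing geodesic $wy$, and let $\Phi_t(y)$ be its point with $|w\Phi_t(y)|=t|wy|$.

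All points and geodesics involved lie in $\overline{B_{4r}(p)}$. So the four-point condition gives triangle comparison, hence $|\Phi_t(y)\Phi_t(z)|\ge c\,t\,|yz|$ for \emph{every} pair $y,z$, with $c=c(\kappa r^2)>0$. This holds pointwise: there is no averaging, and non-uniqueness of geodesics is irrelevant.

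Thus $\Phi_t$ is injective, and $\Phi_t^{-1}$ is a $(ct)^{-1}$-Lipschitz map from $\Phi_t(B_r(p))\subset B_{2tr}(w)$ onto $B_r(p)$. Hence $\dim_H B_\varepsilon(w)\ge\dim_H B_r(p)$ for all $\varepsilon>0$. In other words, every nonempty open subset of a working ball has the dimension of the ball.

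Overlapping working balls therefore have equal dimension. Connectedness, together with a countable cover by working balls (using separability), then finishes the proof exactly as in your step (4). You can drop steps (1)--(3) of the strainer route entirely; in the paper, strainers only enter after this lemma.
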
 \begin{proof}
The argument is the local version of
\cite[Theorem~10.6.1 and Lemma~10.6.2]{MR1835418};
compare also \cite[Exercise~10.6.3]{MR1835418}.
Since only sufficiently short minimizing geodesics are used, the same
radial contraction argument applies inside a working neighborhood.

Since $X$ is connected and locally compact, it is separable. Hence it can be covered by countably many  such neighborhoods, which  shows that their common dimension equals $\dim_H X$, and therefore every nonempty open subset has dimension $\dim_H X$. \end{proof}

We now turn to the infinitesimal geometry of an Alexandrov region.
Since the definition of angles uses only sufficiently short geodesics
and local comparison, the usual construction can be carried out inside
a working ball.  
	Let $X$ be an Alexandrov region, $p \in X$ and $\gamma_1,\gamma_2: [0,\varepsilon)\rightarrow X$ two shortest paths with $\gamma_1(0)=p=\gamma_2(0)$. We can define their angle as 
        $$ \sphericalangle(\gamma_1, \gamma_2) := \lim_{s,t \rightarrow 0} \sphericalangle_{\kappa} \gamma_1(s)p \gamma_2(t).$$
    In an Alexandrov region the limit above always exists, 
    and $\sphericalangle(\gamma_1,\gamma_2)$ satisfies the triangle inequality \cite{MR1835418}[Theorem 3.6.34]. Let $r(p)>0$ be a working radius. Consider the set $S_p$ given by all shortest curves starting at $p$ and ending at some $x \in B_{r/4}(p)$ together with the equivalence relation $\gamma_1 \sim \gamma_2$ whenever $\sphericalangle(\gamma_1,\gamma_2)=0$. This makes $(S_p / \sim, \sphericalangle)$ into a metric space. We can now define the space of directions.
	\begin{defi}[Space of directions]
		Let $p \in X$ be a point in an Alexandrov region. Consider the metric space $(S_p / \sim, \sphericalangle)$ defined as above. Then the space of directions $\Sigma_p(X)$ is defined to be the metric completion of $(S_p / \sim, \sphericalangle)$.
	\end{defi}
Compactness of $\Sigma_p$ will follow from local packing estimates obtained via bi-Lipschitz distance coordinates associated with strainers (\autoref{Theorem: Bilipschitz Homeo around strainer}).

	\begin{defi}[Strainer]
		Let $X$ be an Alexandrov region. A point $p \in X$ is an $(m,\varepsilon)$-strained point if there are
		$m$ pairs of points $(a_i, b_i)$ in a working ball around $p$ such that
			$$\sphericalangle_{\kappa}a_i p b_i \geq \pi - \varepsilon, \quad \sphericalangle_{\kappa}a_i p a_j \geq \frac{\pi}{2} - \varepsilon,\quad \sphericalangle_{\kappa}b_i p b_j \geq \frac{\pi}{2} - \varepsilon,\quad \sphericalangle_{\kappa}a_i p b_j \geq \frac{\pi}{2} - \varepsilon.  $$
		for all $i, j \in \lbrace 1, \ldots, m \rbrace $ with $i \neq j$. The collection $\lbrace(a_i, b_i)\rbrace$ itself is called an
		$(m,\varepsilon)$-strainer for $p$.
	\end{defi}
	\begin{defi}[Strainer Number]
		Let $X$ be an Alexandrov region. For $n \in \mathbb{N}$ and $\varepsilon>0$ denote by $\operatorname{Str}^n(X,\varepsilon)$ the set of $(n,\varepsilon)$-strained points in $X$. The strainer number
			$$\operatorname{Strain}(X,\varepsilon):= \sup  \lbrace m \in \mathbb{N}: \operatorname{Str}^m(X,\varepsilon)  \neq \emptyset \rbrace$$
		is the supremum of numbers $m$ such that there exists an $(m,\varepsilon)$-strainer
		in $X$.	The local strainer number  at a point $p \in X$ is
			$$\operatorname{Strain}(X,\varepsilon,p):= \sup \lbrace m \in \mathbb{N}: \operatorname{Str}^m(X,\varepsilon) \cap B_r(p)  \neq \emptyset \text{ for all } r>0 \rbrace  $$
		 the supremum of numbers $m$ such that every neighborhood of $p$ contains an $(m,\varepsilon)$-strained point.
	\end{defi}
As for Alexandrov spaces we have the following key result.

\begin{thm}[Distance coordinates near a strained point]\label{Theorem: Bilipschitz Homeo around strainer}
Let $p\in X$ be an $(m,\varepsilon)$-strained point in an Alexandrov region, where
$\varepsilon>0$ is sufficiently small depending only on $m$.

\begin{enumerate}
\item The associated distance map is open near $p$. In particular,
\[
m\le \dim_H X.
\]

\item If $m=\operatorname{Strain}(X,\varepsilon,p)$, then the distance
map is bi-Lipschitz on a neighborhood of $p$.
\end{enumerate}
\end{thm}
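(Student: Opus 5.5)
The plan is to follow the classical argument of Burago--Gromov--Perelman (see \cite{MR1835418}, Section~10.8): everything is local, so it transfers verbatim once all points, strainers and geodesics lie in a compact working ball. Fix a working radius $r$ at $p$. Let $\{(a_i,b_i)\}$ be an $(m,\varepsilon)$-strainer at $p$ inside $B_r(p)$, and put $f=(f_1,\dots,f_m)$ with $f_i=d(a_i,\cdot)$. Choose $\delta>0$ so small, relative to $\min_i\{d(p,a_i),d(p,b_i)\}$, that every $q\in B_\delta(p)$ is $(m,2\varepsilon)$-strained by the same pairs. This follows from continuity of comparison angles in the vertex. Then every comparison argument below uses quadruples in $\overline{B_{4r}(p)}$, where \eqref{eq: comparison inequality} holds. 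Minimizing geodesics between points of $\overline{B_{2r}(p)}$ exist and stay in $\overline{B_{4r}(p)}$, which replaces completeness.

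For part (1), I will prove the standard infinitesimal openness estimate. For every $q\in B_\delta(p)$ and every $i$ there is a point $x$ near $q$ with $f_i(x)-f_i(q)\approx \pm d(x,q)$ and $|f_j(x)-f_j(q)|\le \tau(\varepsilon)\, d(x,q)$ for $j\neq i$. To get it, move along a shortest path from $q$ to $a_i$ or to $b_i$. The first variation formula in the working ball, together with the comparison estimates $\sphericalangle a_i q b_j\approx\pi/2$ that follow from the strainer angles via the quadruple inequality, gives the required derivatives. By a standard iteration (a Lyusternik--Graves/Newton-type argument, as in \cite[Theorem~10.8.13]{MR1835418}) one concludes that $f(B_\rho(q))\supset B_{c\rho}(f(q))$ for a uniform $c>0$. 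The iteration converges because $\overline{B_{6r}(p)}$ is compact, so Cauchy sequences of points have limits inside $X$. This is exactly where the local compactness of the working ball replaces completeness. Openness gives $f(B_\delta(p))\supset$ an open set in $\R^m$. Since $f$ is $1$-Lipschitz in each coordinate, hence Lipschitz, $\dim_H B_\delta(p)\ge m$. Dimension homogeneity then gives $m\le\dim_H X$.

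For part (2), assume $m=\operatorname{Strain}(X,\varepsilon,p)$, so that after shrinking $\delta$ no point of $B_\delta(p)$ is $(m+1,\varepsilon')$-strained, for suitable $\varepsilon'$ comparable to $\varepsilon$. Suppose injectivity fails quantitatively: there are $x,y\in B_{\delta/10}(p)$ with $|f(x)-f(y)|\ll d(x,y)$. Take a shortest path from $x$ to $y$. Almost equal values of the $f_i$ force $\sphericalangle a_i x y\approx\pi/2$ and likewise for $b_i$, again via first variation and comparison. Adding the pair $(y, y')$, where $y'$ is a point beyond $x$ on an extension, would then produce an $(m+1,\tau(\varepsilon))$-strainer at a point near $x$, which is a contradiction. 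The extension beyond $x$ does not exist in general; the standard fix, which I will use, is to pick $x$ to be a near-minimizer of a suitable auxiliary function, or to use the midpoint of $[xy]$ together with $x$ and $y$ as the new pair. This yields $|f(x)-f(y)|\ge c\,d(x,y)$, and hence $f$ is bi-Lipschitz near $p$.

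I expect the main obstacle to be this last step. One has to produce the extra strainer pair at a point of $B_\delta(p)$ with controlled angles, keeping all points inside the working ball, and the choice of $\varepsilon'$ versus $\varepsilon$ in the definition of the local strainer number has to be handled carefully. I would first try the midpoint construction, with the working ball shrunk to $B_{\delta/10}(p)$ so that the distances $d(x,y)$ are small compared with $d(p,a_i)$.
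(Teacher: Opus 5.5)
Your overall route matches the paper's. The paper's proof simply invokes \S10.8 of \cite{MR1835418} and notes that all points and geodesics stay in a working ball. Your part (1) does the same: a Newton-type iteration, with compactness of $\overline{B_{6r}(p)}$ replacing completeness, then a Lipschitz map whose image contains an open subset of $\R^m$.

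Part (2), however, has a gap at the contradiction step. You assert that $m=\operatorname{Strain}(X,\varepsilon,p)$ rules out $(m+1,\varepsilon')$-strained points near $p$ for some $\varepsilon'$ ``comparable to $\varepsilon$''. But an $(m+1,\varepsilon)$-strainer is also an $(m+1,\varepsilon')$-strainer for every $\varepsilon'\ge\varepsilon$. So the hypothesis only excludes tolerances $\varepsilon'\le\varepsilon$, while the midpoint construction necessarily loses a factor:
\begin{enumerate}
\item The old pairs are only $(m,\varepsilon+o(1))$-strainers at $z\ne p$.
\item For the new pair $(x,y)$ at the midpoint $z$ of $[xy]$, the near-equality $|a_ix|\approx|a_iy|$ plus point-on-side comparison gives only the upper bounds $\sphericalangle_\kappa a_izx,\ \sphericalangle_\kappa a_izy\le\pi/2+o(1)$.
\item The lower bounds, and everything involving the $b_i$ (which do not enter $f$), come from $\sphericalangle_\kappa a_iwv+\sphericalangle_\kappa b_iwv\ge\pi-\varepsilon-o(1)$ for nearby $w,v$, together with the fact that adjacent angles at $z$ sum to $\pi$. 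That inequality itself comes from the perimeter bound $\sphericalangle a_iwv+\sphericalangle b_iwv\le2\pi-\sphericalangle a_iwb_i\le\pi+\varepsilon+o(1)$ in $\Sigma_w$, applied at both ends of a short segment, plus the angle sum $\approx\pi$ of the thin comparison triangles.
\end{enumerate}
Each of these steps costs a multiple of $\varepsilon$. So you obtain an $(m+1,C\varepsilon)$-strained point with $C>1$, which contradicts nothing. The first variation formula cannot replace these comparison estimates, since it is only asymptotic along a fixed geodesic while your pairs $x,y$ vary.

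The missing step is to route the contradiction through dimension. Since $C\varepsilon$ is still small depending on $m+1$, part (1) at the new strained points gives $\dim_H X\ge m+1$. You must then show separately that $m=\operatorname{Strain}(X,\varepsilon,p)$ forces $\dim_H X=m$. That is, you need that an open set of Hausdorff dimension $\ge k$ contains $(k,\eta)$-strained points for every $\eta>0$.

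This follows by induction on $k$ with tolerances $\eta_k=\eta_{k+1}/(2C)$. For the base case, midpoints of geodesics are $(1,0)$-strained. For the step, suppose $k<\dim_H X$. Then the distance map of a $(k,\eta_k)$-strainer cannot be bi-Lipschitz on any open set, since its image lies in $\R^k$ and dimension homogeneity would be violated. So the same midpoint construction produces a $(k+1,\eta_{k+1})$-strained point nearby.

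Equivalently, prove (2) under the standard hypothesis $m=\dim_H X$, and use the induction to see that $\operatorname{Strain}(X,\varepsilon,p)=\dim_H X$ for all small $\varepsilon$.
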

\begin{proof}
The argument is local. By
\cite[Proposition~10.8.15]{MR1835418}, the distance map associated
with the strainer is open in a neighborhood of $p$. Since this map is
Lipschitz and its image contains an open subset of $\mathbb R^m$, we get
$m\le \dim_H X;$
compare \cite[Corollary~10.8.16]{MR1835418}.

If, in addition,
$
m=\operatorname{Strain}(X,\varepsilon,p),
$
the arguments of
\cite[Propositions~10.8.12 and~10.8.15,
Lemmas~10.8.13 and~10.8.14,
Theorem~10.8.18]{MR1835418}
show that, after shrinking the neighborhood of $p$, the same distance
map is bi-Lipschitz.

All points and minimizing geodesics involved can be chosen inside a
working neighborhood of $p$, so the proof applies without a global
completeness assumption.
\end{proof}

We  use this local Euclidean parametrization to bound separated
sets of directions.

	\begin{prop}[Space of directions is compact]\label{Prop: Space of directions is compact}
		Let $X$ be an Alexandrov region and $p \in X$ a point. Then the space of directions $\Sigma_p(X)$ is compact.
	\end{prop}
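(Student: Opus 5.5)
Since $\Sigma_p(X)$ is by definition the metric completion of $(S_p/\sim,\sphericalangle)$, it suffices to show that $S_p/\sim$ is totally bounded. Concretely, we will prove the following: for every $\delta>0$ there is $N(\delta)<\infty$ such that any $\delta$-separated family of directions $\xi_1,\dots,\xi_N$ at $p$ satisfies $N\le N(\delta)$. Total boundedness passes to the completion, and a complete totally bounded metric space is compact.

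The packing bound will come from volume, via the local Euclidean parametrization. We fix a working radius $r$ at $p$ and choose geodesics $\gamma_i$ representing $\xi_i$. For small $t>0$ we consider the points $x_i=\gamma_i(t)$. The first step is to show that these points are roughly $ct$-separated. By the definition of the angle as a limit of comparison angles, for $t$ small enough depending on the finitely many chosen directions, we have $\sphericalangle_\kappa x_i p x_j\ge \delta/2$. The law of cosines in the model plane then gives
\[
|x_ix_j|\ge c(\delta,\kappa)\,t .
\]
Hence the balls $B_{ct/2}(x_i)$ are pairwise disjoint and all lie in $B_{2t}(p)$.

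The second step is to compare volumes. With $n=\dim_H X$, we want an upper bound $\mathcal H^n(B_{2t}(p))\le C t^n$ and a lower bound $\mathcal H^n(B_{ct/2}(x_i))\ge c' t^n$, with constants independent of $t$. Dividing the two bounds then yields $N\le C/c'$, independent of $t$.

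Both bounds should come from \autoref{Theorem: Bilipschitz Homeo around strainer}. Let $m=\operatorname{Strain}(X,\varepsilon,p)$. Every neighborhood of $p$ contains $(m,\varepsilon)$-strained points, and near such points the distance map is bi-Lipschitz onto an open subset of $\mathbb R^m$. Together with dimension homogeneity, this forces $m=n$. This gives locally an $\mathbb R^n$-chart, hence the volume bounds of Euclidean type at those points.

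The main obstacle is that $p$ itself need not be strained, so the bi-Lipschitz charts do not directly control the small balls around $p$ uniformly in $t$. To get around this, we plan to avoid volumes near $p$ and use instead the radial contraction and comparison argument, as in the proof of dimension homogeneity and \cite[Theorem~10.6.1]{MR1835418}. The map $\Phi_t$ sending $\gamma(s)\mapsto\gamma(st)$ along geodesics from $p$ is, by comparison, almost $t$-homothetic: distances scale by at least roughly $t$ times the comparison-model factor. This transfers a packing of the fixed annulus $B_{r}(p)\setminus B_{r/2}(p)$, which lies in a compact set and so has finite packing numbers, down to scale $t$.

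Equivalently, we can pull the directions back to scale $r$. Extend each $\gamma_i$ to length $\sim r$, which is possible inside the working ball after replacing $x_i$ by points at a fixed distance $r/4$. Because comparison angles are monotone, the points $\gamma_i(r/4)$ are $c(\delta,\kappa)r$-separated in the compact set $\overline{B_{r/4}(p)}$. Their number is therefore bounded by the packing number of this compact ball at scale $cr$.

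The delicate point in this route is the existence of long enough representatives: directions are only limits of directions of geodesics to points within $B_{r/4}(p)$. We handle this by taking representatives of approximating directions, which is sufficient for total boundedness.
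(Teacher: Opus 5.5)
Your final route breaks at the sentence ``Because comparison angles are monotone, the points $\gamma_i(r/4)$ are $c(\delta,\kappa)r$-separated.'' Under a \emph{lower} curvature bound, monotonicity runs the other way; the direction you use is the one for upper curvature bounds. The comparison angle $\sphericalangle_\kappa\gamma_i(s)\,p\,\gamma_j(s)$ is nonincreasing in $s$, and $\sphericalangle(\gamma_i,\gamma_j)$ is its limit as $s\to0$, hence an \emph{upper} bound for it. So $\delta$-separated directions give no separation at a fixed scale. On a capped cylinder of radius $\eta\ll r$ (the boundary of a convex body), all points at distance $r/4$ from the tip lie on a circle of length $2\pi\eta$. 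On a round sphere of radius $r/(4\pi)$ they all coincide.

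You only get $|x_ix_j|\ge ct$ at a scale $t$ that depends on the chosen family, which is your first step. So everything hinges on bounding the number of $ct$-separated points in $B_{2t}(p)$ uniformly in $t$. The radial contraction paragraph does not supply this. The inequality $|\Phi_t(x)\Phi_t(y)|\gtrsim t|xy|$ carries separated sets from scale $r$ \emph{down} to scale $t$, so it bounds small-scale packing numbers from below, not from above.

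A quick check confirms that something essential is missing: your final route never uses $\dim_H X<\infty$. Yet in the compact space $\prod_i S^1(r_i)$ ($\ell^2$-product, $\sum_i r_i^2<\infty$), which has curvature $\ge0$, the coordinate directions at any point are pairwise at angle $\pi/2$. So $\Sigma_p$ is not totally bounded there. The coordinate geodesics are also minimizing only up to length $\pi r_i\to0$, so the long representatives you want do not exist either.

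What is missing is the uniform small-scale packing estimate that the paper imports from \cite[Lemma~10.9.2]{MR1835418}. Your abandoned volume plan is the right way to get it. The obstacle that $p$ need not be strained disappears once the lower bound is made relative instead of absolute. Radial contraction toward $x_i$ with ratio $\lambda\approx c/6$ maps $B_{3t}(x_i)$ injectively into $B_{ct/2}(x_i)$. By comparison, used in the correct direction, it shrinks distances by no more than a factor $\approx\lambda$. Hence $\mathcal H^n(B_{ct/2}(x_i))\ge c_1\mathcal H^n(B_{3t}(x_i))\ge c_1\mathcal H^n(B_{2t}(p))$, with $c_1>0$ depending only on $c$ and $n$ for small $t$. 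Disjointness of the balls $B_{ct/2}(x_i)\subset B_{2t}(p)$ then gives $N\le 1/c_1$, independent of $t$ --- provided $0<\mathcal H^n(B_{2t}(p))<\infty$.

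That proviso is the only place where the bi-Lipschitz chart of Theorem~\ref{Theorem: Bilipschitz Homeo around strainer} is needed, taken at a nearby maximally strained point $q$. Contracting a chart neighborhood of $q$ toward $p$ gives positivity. Contracting $B_{2t}(p)$ toward $q$ into the chart gives finiteness. This combination, a chart near a point of maximal strainer rank plus radial contraction, is what the paper's proof refers to.
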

\begin{proof}
By construction, $\Sigma_p(X)$ is complete, so it remains to prove
total boundedness. We repeat the proof of
\cite[Proposition~10.9.1]{MR1835418}.
The packing estimate used there,
\cite[Lemma~10.9.2]{MR1835418}, is local. Its proof uses a
bi-Lipschitz distance chart near a point of maximal strainer rank
and the radial contraction argument of
\cite[Lemma~10.6.2]{MR1835418}.
The former is available here by Theorem~2.8, while the latter can be
carried out inside a working neighborhood of $p$.

Consequently, for every $\varepsilon>0$ there is a uniform finite
upper bound on the cardinality of an $\varepsilon$-separated subset
of $\Sigma_p(X)$, exactly as in the proof of
\cite[Proposition~10.9.1]{MR1835418}.
Thus $\Sigma_p(X)$ is totally bounded, and hence compact.
\end{proof}

Compactness of $\Sigma_p$ allows us to carry out the standard blow-up
argument. Observe that for fixed $R>0$ the ball $B_R(p) \subset \frac{1}{r}X$ is compact for sufficiently small $r$, hence the notion of pointed GH-convergence is well-defined.\par 
Since every fixed bounded ball in the rescaled space
$r^{-1}X$ lies, for sufficiently small $r$, inside a working
neighborhood of $p$, only the local geometry of $X$ enters the
argument.

\begin{lem}[Convergence to tangent space]\label{Lemma: Convergence to tangent space}
			Let $X$ be an Alexandrov region and $p \in X$ an arbitrary point. Then one has for $r \rightarrow 0$
				$$\left(\frac{1}{r}X,p \right) \xrightarrow{d_{p.GH}} \left(\operatorname{Cone}(\Sigma_p(X)),o_p \right),$$
			where $\operatorname{Cone}(Y)$ denotes the metric cone over a metric space $Y$ and $d_{p.GH}$ denotes the pointed Gromov--Hausdorff convergence.
	\end{lem}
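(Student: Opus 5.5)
The plan is to follow the standard proof for Alexandrov spaces (Burago--Burago--Ivanov, Theorem~10.9.3), checking that every ingredient is local. Fix a working radius $r_0$ at $p$. For $r\le r_0$ small and any $R>0$, the ball $B_{R}(p)$ in $\frac1r X$ is the ball $B_{rR}(p)\subset X$ with rescaled metric. Once $rR<r_0$, this ball lies in a compact working neighborhood. There minimizing geodesics exist and the comparison inequality holds. So all constructions below take place inside $\overline{B_{4r_0}(p)}$.

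The key step is to build, for each $\delta>0$, a finite $\delta$-net in $\Sigma_p$ and transport it to $X$. By Proposition~\ref{Prop: Space of directions is compact}, $\Sigma_p$ is compact. Choose a finite $\delta$-net $\{\xi_1,\dots,\xi_N\}$ consisting of actual geodesic directions, i.e.\ shortest paths $\gamma_i$ from $p$ to points in $B_{r_0/4}(p)$; this is possible because such directions are dense by definition. Define
\[
f_r\colon\{t\xi_i: t\in[0,R]\}\subset \cone(\Sigma_p)\to \tfrac1r X,\qquad t\xi_i\mapsto \gamma_i(rt).
\]
We show this is a $\theta(\delta)+o(1)$-isometry onto a $(\theta(\delta)+o(1))$-net of $B_R(p)\subset\frac1r X$ as $r\to0$; letting $\delta\to0$ then gives pointed GH convergence.

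Distortion is handled by the definition of angle. Since $\sphericalangle_\kappa\gamma_i(s)p\gamma_j(t)\to\sphericalangle(\gamma_i,\gamma_j)$ as $s,t\to0$, the cosine law (the $\kappa$-comparison cosine law tends to the Euclidean one after rescaling) gives
\[
\tfrac1r\, d(\gamma_i(rs),\gamma_j(rt)) \to \sqrt{s^2+t^2-2st\cos\sphericalangle(\xi_i,\xi_j)}
\]
uniformly for $s,t\in[0,R]$. The right-hand side is exactly the cone distance. Uniformity holds because only finitely many pairs are involved and, by monotonicity of comparison angles (available inside the working ball), the convergence is monotone.

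The main obstacle is the net property in $X$. Given $x$ with $d(p,x)\le rR$, connect $p$ to $x$ by a shortest path. Its direction $\xi_x$ satisfies $\sphericalangle(\xi_x,\xi_i)<\delta$ for some $i$. We need $d(x,\gamma_i(d(p,x)))\le (C\delta+o(1))\,r$ uniformly in $x$, but the angle convergence for the pair $(\xi_x,\xi_i)$ is not a priori uniform in $x$.

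To overcome this, we use the comparison inequality (hinge version). For the hinge at $p$ formed by $[px]$ and $\gamma_i$, comparison gives $\sphericalangle_\kappa x p \gamma_i(t)\le \sphericalangle(\xi_x,\xi_i)$. This yields $d(x,\gamma_i(d(p,x)))\le$ the comparison-triangle side, which is at most $2\sin(\delta/2)\,d(p,x)(1+o(1))$, uniformly in $x$ since $d(p,x)\le rR\to0$. This step needs hinge comparison, i.e.\ the equivalence of the four-point condition \eqref{eq: comparison inequality} with angle comparison. That equivalence is proved locally in BBI, Theorem~4.3.5 / 3.6.34, using only geodesics in the working ball.

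Finally, the image of $f_r$ is a $(C\delta+o(1))$-net of the rescaled $R$-ball. The cone's finite set $\{t\xi_i\}$, with $t$ taken in a fine grid, is a $\delta R$-net of the cone ball. Combining these gives $\limsup_{r\to0} d_{GH}\bigl(B_R(p)\subset\tfrac1rX,\ B_R(o_p)\bigr)\le C R\delta$ for every $\delta>0$, which is the claim.
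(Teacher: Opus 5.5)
Your proposal is correct and follows essentially the same approach as the paper. The paper simply invokes the proof of Burago--Burago--Ivanov, Theorem~10.9.3, noting that for small $r$ every point and geodesic involved lies in a working ball at $p$. You write out that same proof with the locality checks made explicit: a finite net of geodesic directions from compactness of $\Sigma_p$, control of the distortion via convergence of comparison angles, and the net property via hinge comparison.
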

	\begin{proof}		
        Having \autoref{Prop: Space of directions is compact}, we apply the
standard proof of \cite[Theorem 10.9.3, pp.~391--392]{MR1835418}.
For every fixed bounded ball in the rescaled spaces, all points and
minimizing geodesics involved lie, for sufficiently small $r$, inside
a working ball at $p$. Thus the proof uses only local geodesicity,
local comparison, and compactness of $\Sigma_p$.
	\end{proof}
It remains to verify that the limiting cone is a genuine Alexandrov
space and to identify its dimension.

	\begin{cor}[Tangent space and space of directions are Alexandrov spaces]\label{Cor: Tangent space and space of directions are Alexandrov spaces}
		Let $X$ be an $n$-dimensional Alexandrov region and $p \in X$ an arbitrary point. Then $\operatorname{Cone}(\Sigma_p(X)) $ is an
        $n$-dimensional Alexandrov space with curvature $\geq 0$. Thus $\Sigma_p(X)$ is an
        $(n-1)$-dimensional Alexandrov space with curvature $\geq 1$ or consists of two points.
	\end{cor}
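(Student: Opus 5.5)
The plan is to establish three things about the tangent cone $T_p:=\operatorname{Cone}(\Sigma_p(X))$: that it is a complete length space with curvature $\geq 0$, that its Hausdorff dimension is $n$, and that the statement about $\Sigma_p(X)$ then follows from the cone structure.

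\emph{Completeness, local compactness and length structure.} $\Sigma_p(X)$ is compact by \autoref{Prop: Space of directions is compact}. Hence $T_p$ is a complete, locally compact metric space: closed balls around $o_p$ are continuous images of $[0,R]\times\Sigma_p$. By \autoref{Lemma: Convergence to tangent space}, $T_p$ is a pointed Gromov--Hausdorff limit of the rescalings $\frac1r X$. Fix $R>0$. For $r$ small, the ball $B_{2R}(p)\subset \frac1r X$ lies in a working ball, so any two of its points in $B_R$ are joined by minimizing geodesics, and these have midpoints. Midpoints pass to the limit by the usual compactness argument, so $T_p$ is geodesic (at least on bounded balls, hence globally).

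\emph{Curvature bound.} For quadruples in $B_R(p)\subset \frac1r X$, the working-ball condition gives \eqref{eq: comparison inequality} with $\kappa$ replaced by $r^2\kappa$. The comparison angles $\sphericalangle_{r^2\kappa}$ depend continuously on the side lengths and on the curvature parameter. Passing to the limit $r\to0$ along GH-approximations, every quadruple in $T_p$ satisfies the inequality with $\kappa=0$ (degenerate quadruples being handled by continuity as usual). So $T_p$ satisfies the comparison globally. Together with completeness and the geodesic property, $T_p$ is an Alexandrov space of curvature $\geq0$, provided its dimension is finite.

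\emph{Dimension.} This is the main obstacle, since Hausdorff dimension is not continuous under GH-limits in general. I would use strainers. Choose $\delta$ small. By dimension homogeneity, every neighborhood of $p$ has dimension $n$. The local analysis of Theorem~\ref{Theorem: Bilipschitz Homeo around strainer}, namely the argument of \cite[Theorem~10.8.18]{MR1835418}, shows that $\dim_H$ of a neighborhood equals the local strainer number, so arbitrarily close to $p$ there are $(n,\delta)$-strained points.

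For the upper bound $\dim T_p\le n$, I would use the uniform packing estimate from the proof of \autoref{Prop: Space of directions is compact}: \cite[Lemma~10.9.2]{MR1835418} bounds the number of $\epsilon$-separated directions by $C\epsilon^{-(n-1)}$ with $n=\dim_H X$. This gives $\dim\Sigma_p\le n-1$ and hence $\dim T_p\le n$.

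For the lower bound, I would show that $\Sigma_p$ carries an $(n-1,\delta')$-strainer. One route: take directions $\xi$ at $p$, note that small balls are bi-Lipschitz to $\mathbb R^n$ at generic nearby points, and use the volume-type argument of \cite[10.9]{MR1835418}, namely radial contraction to $p$ together with the packing bound. This shows that the $\epsilon$-packing number of $\Sigma_p$ is at least $c\epsilon^{-(n-1)}$, because the $n$-dimensional mass of $B_r(p)$, which is positive, scales like $r^n$ times the $(n-1)$-dimensional size of $\Sigma_p$. A cleaner alternative is to cite that Alexandrov spaces have integral dimension equal to strainer number, and then observe that an $(n,\delta)$-strainer in $\frac1r X$ near $p$ at bounded scale passes to an $(n,\delta)$-strainer in $T_p$; this requires producing strained points at distance comparable to $r$ with strainers of size comparable to $r$, which I expect to be the delicate point. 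Either way we get $\dim T_p=n$.

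\emph{Space of directions.} Finally, $\Sigma_p$ is compact, and its cone is an $n$-dimensional Alexandrov space of curvature $\geq0$. By the standard cone characterization (\cite[Theorem~10.2.3]{MR1835418}, Berestovskii), $\Sigma_p$ has curvature $\geq1$ with diameter $\le\pi$, or else it is a two-point space, which occurs exactly when $n=1$. Its dimension is $n-1$.
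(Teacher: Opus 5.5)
\paragraph{Verdict: gap in the lower dimension bound.}
Your treatment of completeness, geodesicity and the curvature bound is essentially the paper's. The missing piece is the lower bound $\dim T_p\ge n$: neither of your two routes amounts to an argument.

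\paragraph{Upper bound.}
Your bound $\dim T_p\le n$ via a count of separated directions is a legitimate alternative to the paper's argument. The paper instead transports $(m,\varepsilon)$-strainers near $o_p$ to $r^{-1}X$ by GH approximations and uses that the strainer number is at most $\dim_H X$ (Theorem~\ref{Theorem: Bilipschitz Homeo around strainer}). Do check the exponent, though. A bound $C\varepsilon^{-n}$, which is all that compactness of $\Sigma_p$ needs, would only give $\dim T_p\le n+1$. From a bound $\operatorname{pack}_{\varepsilon r}B_{2r}(p)\le C\varepsilon^{-n}$, uniform in small $r$, you do get the exponent $n-1$: place about $\varepsilon^{-1}$ points on each geodesic between distances $r$ and $2r$.

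\paragraph{Lower bound: why your routes fail.}
Your second route you flag yourself. Strained points near $p$ need not carry strainers of size comparable to their distance from $p$, so after rescaling by $1/r$ they can degenerate. Producing scale-comparable strainers is essentially the statement being proved.

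Your first route rests on the claim that the mass of $B_r(p)$ ``scales like $r^n$ times the $(n-1)$-dimensional size of $\Sigma_p$''. That is a conclusion, not a mechanism: nothing in your sketch transports separated sets or measure from $B_r(p)$ into $\Sigma_p$. Moreover, a lower packing bound only controls the lower box dimension, which can exceed the Hausdorff dimension. You would still need the nontrivial equality of box and Hausdorff dimension for finite-dimensional Alexandrov spaces, applied to $T_p$.

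\paragraph{The missing idea.}
The paper uses the following one-line argument.
\begin{itemize}
\item By angle comparison, $\sphericalangle_\kappa xpy\le\sphericalangle(\uparrow_p^x,\uparrow_p^y)$ for $x,y$ in a small ball around $p$.
\item The third side in the $\kappa$-law of cosines is increasing in the angle.
\item Hence $\log_p^\kappa\colon B_r(p)\to\operatorname{Cone}_\kappa(\Sigma_p)$, $x\mapsto(|px|,\uparrow_p^x)$, is noncontracting.
\item Its inverse on the image is $1$-Lipschitz, so by dimension homogeneity $n=\dim_H B_r(p)\le\dim_H\operatorname{Cone}_\kappa(\Sigma_p)$.
\item The $\kappa$-cone is bi-Lipschitz to the Euclidean cone on bounded balls, so $\dim_H T_p\ge n$.
\end{itemize}

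Your radial-contraction idea can be completed in the same spirit. Contraction by $\lambda$ toward $p$ is $c\lambda$-co-Lipschitz, which gives scale-invariant lower packing bounds that survive the GH convergence $\lambda^{-1}X\to T_p$. But that still leaves the box-versus-Hausdorff step, whereas the logarithm map gives the Hausdorff-dimension bound directly.
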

		\begin{proof}

        Since $\Sigma_p(X)$ is compact by \autoref{Prop: Space of directions is compact}, $\operatorname{Cone}(\Sigma_p(X))$
is proper. Moreover, on every fixed bounded ball, the rescaled spaces
$r^{-1}X$ are geodesic for all sufficiently small $r$: all points and
minimizing geodesics involved lie inside a working neighborhood of $p$.
Hence the standard stability of geodesic spaces under
Gromov--Hausdorff convergence implies that
$\operatorname{Cone}(\Sigma_p(X))$ is geodesic.

         Now $\operatorname{Cone}(\Sigma_p(X))$ 
            is the GH-limit of spaces 
            where \eqref{eq: comparison inequality} holds
            for every quadruple of points in a small neighborhood of $p$
            with lower curvature bound converging to zero, hence 
            it is an Alexandrov space
            with curvature $\geq 0$.\par
			The dimension of $\operatorname{Cone}(\Sigma_p(X))$ cannot exceed $n$. Indeed, GH approximations carry $(m,\varepsilon)$-strainers around $o_p$ to $(m,\varepsilon)$-strainers around $p$ in $r^{-1}X$. This implies $$\operatorname{Strain}(\operatorname{Cone}(\Sigma_p),\varepsilon,o_p) \leq \operatorname{Strain}(r^{-1}X,\varepsilon,p) .$$ By  Theorem~\ref{Theorem: Bilipschitz Homeo around strainer}, the local strainer number is bounded above by the Hausdorff dimension of $X$. 
            The opposite inequality for dimension follows since $\log^\kappa_p:B_r(p)\to \operatorname{Cone}_\kappa(\Sigma_p(X))$ is a noncontracting map.
            
            Therefore $\operatorname{Cone}(\Sigma_p(X))$ is an $n$-dimensional Alexandrov space. The last assertion now follows from \cite{MR1835418}[Theorem 10.2.3, p. 355].
		\end{proof}

\section{Strictly concave functions in Alexandrov regions}
To construct arbitrarily small convex neighborhoods, we need a local source of strict concavity. Indeed, taking convex hulls is not a priori a local operation: the convex hull of a small set may leave the prescribed neighborhood. Perelman's construction provides strictly concave functions locally; moreover, it can be arranged so that the resulting function has a strict local maximum at a prescribed point. Suitable superlevel sets then give arbitrarily small convex neighborhoods.

The construction is standard in Alexandrov geometry; see \cites{MR1220498,MR2408266, MR2408265, nepechiy2019canonical, zbMATH07802912,CBBlec}. We give the details needed here for two reasons. First, we have to verify that all arguments are local and remain valid for Alexandrov regions. Second, the quantitative result will require the construction to be lifted along non-collapsing Gromov--Hausdorff convergent sequences. We therefore keep track of the finite geometric data and the estimates entering the construction.

\subsection{
Preliminary notions}
We first recall the notions of semiconcavity and differential used below.

\begin{defi}[$\lambda$-concavity]\label{Def: lambda concavity}
Let $X$ be an Alexandrov region and $\Omega\subset X$ be an open working neighborhood. A locally Lipschitz function
$f:\Omega\to\mathbb R$ is called \emph{$\lambda$-concave} if, for every unit-speed geodesic $\gamma$ contained in $\Omega$,
$$
f\circ\gamma(t)-\frac{\lambda}{2}t^2
$$
is concave. The function $f$ is called \emph{semiconcave} if every point has a neighborhood on which $f$ is $\lambda$-concave for some $\lambda$.
\end{defi}

As in the usual Alexandrov setting, the distance function
$$
\dist_q(x)=|qx|
$$
is locally semiconcave away from $q$; the standard proof is local and applies inside a working ball.

For a semiconcave function $f$ and a geodesic $\gamma$ starting at
$p$, the one-sided derivative
$$
(f\circ\gamma)^+(0)
=
\lim_{t\searrow0}\frac{f(\gamma(t))-f(p)}{t}
$$
exists. If $\gamma$ is a minimizing geodesic from $p$ to $q$, we denote
this derivative by $f'_p(\uparrow_p^q)$. It depends only on the direction
$\uparrow_p^q\in S_p(X)$ and extends uniquely to $\Sigma_p(X)$. Extending
it radially gives
$$
d_pf:T_pX\to\mathbb R,
$$
called the differential of $f$ at $p$. For $v\in T_pX$, define its \emph{absolute value} by
\(
|v|:=d_{T_pX}(o_p,v).
\)
For $u,v\in T_pX$, define their \emph{scalar product} by
\[
\langle u,v\rangle
:=
\frac{|u|^2+|v|^2-d_{T_pX}(u,v)^2}{2}.
\]

We now formulate Perelman's local strict concavity construction
for Alexandrov regions.
\begin{prop}[Existence of strictly concave functions]\label{Prop:Existence of strictly concave functions}
Let $A$ be an $n$-dimensional Alexandrov region, and let $p\in A$. Then there exists a strictly concave function $f$ in a small neighborhood of $p$.
   Moreover, for any vector $v\in T_pA$ 
   the differential $d_{p}f(\cdot)$ can be chosen arbitrarily close to the map 
   $ - \langle v,\cdot \rangle$.

\end{prop}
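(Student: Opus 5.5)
We follow Perelman's construction and average functions of the form $\phi\circ\dist_{q}$ over a finite family of points $q$ that are dense in the directions at $p$. Everything takes place in a working ball $B_r(p)$.

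\emph{Step 1 (finite net of directions).} Fix small $\delta>0$. By \autoref{Prop: Space of directions is compact} and the density of geodesic directions $S_p$ in $\Sigma_p(A)$, choose points $q_1,\dots,q_N$ in a small sphere $\partial B_\rho(p)$, $\rho\ll r$. Their directions $\xi_i=\uparrow_p^{q_i}$ should form a $\delta$-net in $\Sigma_p(A)$. Choose $N$ maximal with the $\xi_i$ pairwise $\delta$-separated, so that $N$ is comparable to $\delta^{1-n}$; this is the packing control available since $\Sigma_p$ is an $(n-1)$-dimensional Alexandrov space by \autoref{Cor: Tangent space and space of directions are Alexandrov spaces}.

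\emph{Step 2 (building blocks and averaging).} For each $i$ take $f_i=\phi_\rho(\dist_{q_i})$. Here $\phi_\rho$ is a convex-increasing reparametrization chosen so that, near $p$, $f_i$ is $\lambda$-concave with $\lambda$ controlled by $1/\rho$. Along any unit geodesic $\gamma$ through $x$ near $p$, the second variation of $f_i$ is then strongly negative in the component of $\gamma'$ orthogonal to $\uparrow_x^{q_i}$. Set $f=\frac1N\sum_i f_i$.

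\emph{Step 3 (strict concavity, the main obstacle).} The key point is a uniform statement: for every $x$ near $p$ and every direction $\eta\in\Sigma_x$, a definite fraction $c(n)>0$ of the indices $i$ satisfy
$$\bigl|\tfrac\pi2-\sphericalangle(\eta,\uparrow_x^{q_i})\bigr|<\tfrac\pi2-c.$$
These indices contribute second derivative at most $-c/\rho$. Against this, the comparison bound gives the upper estimate $\lambda\le C(\kappa)\rho$ for the positive error. Together these yield $f''\le -c'/\rho<0$.

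To obtain the fraction estimate, pass to the blow-up. By \autoref{Lemma: Convergence to tangent space}, the configuration $(x,q_i)$ rescaled by $1/\rho$ is close to a configuration in $T_pA$. Lower semicontinuity of angles along geodesics converging in the Alexandrov region lets one compare $\sphericalangle(\eta,\uparrow_x^{q_i})$ with cone angles. On the cone, the statement follows from the fact that the set of $\xi\in\Sigma_p$ at angle $<\pi/2-c$ to a given direction has measure bounded below uniformly, by Bishop--Gromov in $\Sigma_p$. I expect this step to need the most care, because $x\neq p$ and the directions at $x$ are not literally those at $p$. I would handle it with the first variation formula and $\delta$-closeness, shrinking the neighborhood of $p$ after fixing $\rho$ and $\delta$.

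\emph{Step 4 (prescribing the differential).} By the first variation formula,
$$d_pf_i=-\phi'_\rho(\rho)\,\langle\xi_i,\cdot\rangle,$$
so $d_pf$ is an average of such functions. Now replace $f$ by the weighted combination $\tilde f=\epsilon f+g$, where $g=\dist_{q}$-type (again reparametrized) with $\uparrow_p^q$ chosen $\delta$-close to $v/|v|$, scaled by $|v|$. We take $g$ only semiconcave, with an additional tiny multiple of $f$ added. Then
$$d_p\tilde f=-|v|\langle\uparrow_p^q,\cdot\rangle+O(\epsilon)\approx-\langle v,\cdot\rangle.$$
Strict concavity is preserved after choosing $g$ with concavity constant $\ll\epsilon c'/\rho$. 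This is possible because $\dist_q$ with $|pq|$ large relative to the neighborhood size has concavity constant of order $1/|pq|$. So we first fix $q$ at distance about $r$, then choose $\rho$ and $\epsilon$ so that $\epsilon c'/\rho$ dominates, and finally shrink the neighborhood.
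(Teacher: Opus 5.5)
There is a genuine gap in Steps 2--3: the concavity mechanism is reversed. A lower curvature bound controls $\dist_q$ only from above, and that bound is \emph{positive} in the orthogonal directions. If $\gamma$ is a unit-speed geodesic through $x$ making angle $\alpha$ with $\uparrow_x^{q}$, then in the barrier sense $(\dist_q\circ\gamma)''\le\cot_\kappa(|qx|)\sin^2\alpha\approx\sin^2\alpha/\rho$. Equality holds in the model space; in $\R^n$ the function $\dist_q$ is convex. Hence, for increasing $\phi$, the best available estimate is
\[
(\phi\circ\dist_q\circ\gamma)''\le\phi'(\dist_q)\,\frac{\sin^2\alpha}{\rho}\,(1+o(1))+\phi''(\dist_q)\cos^2\alpha .
\]
The component of $\gamma'$ orthogonal to $\uparrow_x^{q_i}$ therefore produces a positive error of order $1/\rho$. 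Your own Step 2 says this, which contradicts the bound $C(\kappa)\rho$ claimed in Step 3. The only negative contribution is $\phi''\cos^2\alpha$, coming from the radial component. So $\phi$ must be strictly concave, not convex, with $|\phi''|$ large compared with $\phi'/\rho$.

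Accordingly, the combinatorial statement you need is this: for every $x$ near $p$ and every $\eta\in\Sigma_x$, some index has $\sphericalangle(\eta,\uparrow_x^{q_i})$ bounded away from $\pi/2$. Your displayed condition $\bigl|\frac\pi2-\sphericalangle(\eta,\uparrow_x^{q_i})\bigr|<\frac\pi2-c$ only keeps the angle away from $0$ and $\pi$, which is the wrong set. Your Bishop--Gromov remark about angles $<\frac\pi2-c$ points at the right set, but it does not match the display. As written, the scheme already fails in $\R^n$, where $\frac1N\sum_i\phi(|x-q_i|)$ with convex increasing $\phi$ is convex.

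The justification of the fraction estimate also misses the difficulty you flagged yourself. A blow-up at $p$ together with Bishop--Gromov in $\Sigma_p$ says nothing about directions $\eta\in\Sigma_x$ for $x\neq p$, and $\Sigma_x$ need not resemble $\Sigma_p$. The paper avoids both fractions and blow-ups (Lemma~\ref{pi/2}), in three steps:
\begin{enumerate}
\item It takes $N>C\delta^{-(n-2)}$ directions that are $2\delta$-separated in $\Sigma_p$.
\item Continuity of comparison angles, together with angle $\ge$ comparison angle, keeps the directions $\uparrow_x^{q_i}$ $\delta$-separated for all $x$ near $p$.
\item Using a noncontracting map $\Sigma_x\to S^{n-1}$ (Lemma~\ref{lem:packing-estimates}), it bounds by $C\delta^{-(n-2)}$ the number of $\delta$-separated points in the band $\bigl\{w:\bigl||\eta w|-\frac\pi2\bigr|\le\frac\delta{10}\bigr\}\subset\Sigma_x$.
\end{enumerate}
This yields one good index. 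Taking $\phi_M(s)=(s-r)-M(s-r)^2$ with $M>(2\lambda+1)N/\sin^2\xi$ then lets that single index dominate the positive errors of all $N$ terms.

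Once strict concavity is established this way, your Step 4 is a legitimate alternative to the paper's device of clustering all $q_i$ within $\nu$ of $v_p$. There you add $|v|\dist_q$ to $\epsilon f$ and make the concavity of $f$ strong enough to absorb the roughly $|v|/|pq|$ semiconcavity of $\dist_q$. Just take $q$ in the interior of a geodesic from $p$, so that $\uparrow_p^q$ is unique and the first variation formula gives exactly $-|v|\cos\sphericalangle(\cdot,\uparrow_p^q)$.
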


\subsection{Plan of the proof}

The main analytic ingredient is a simple one-dimensional observation for improving concavity. Let $u$ be a $\lambda$-concave function and let $\phi$ be smooth, increasing, with $\phi'\approx1$ and $\phi''<-M$. Formally,

$$
(\phi\circ u)''=\phi'(u)u''+\phi''(u)(u')^2.
$$

Thus, if $|u'|$ is bounded away from zero, the second term makes $\phi\circ u$ strictly concave for sufficiently large $M$.

We apply this to $u=\dist_q\circ\gamma$, where $\gamma$ is a unit-speed geodesic. The first variation formula gives

$$
(\dist_q\circ\gamma)'(t)=-\cos\alpha(t),
$$

where $\alpha(t)$ is the angle between $\gamma$ and the direction toward $q$.  Hence, in the smooth case

$$
(\phi\circ\dist_q\circ\gamma)''(t)
=
\phi'(\dist_q)(\dist_q\circ\gamma)''
+\phi''(\dist_q)\cos^2\alpha(t).
$$

The first term is bounded above by semiconcavity of the distance function, while the second is uniformly negative whenever $\alpha$ stays away from $\pi/2$. Thus composition with $\phi$ improves concavity in every direction which is not almost orthogonal to the direction toward $q$.

The packing argument (Lemma~\ref{pi/2}) provides finitely many points $q_1,\ldots,q_N$
and a constant $\xi>0$ such that every geodesic direction makes an
angle at least $\xi$ away from $\pi/2$ with the direction to some
$q_i$. Consequently, for that $i$,
$
\cos^2\alpha_i\ge \sin^2\xi=:c_0>0.
$

 Averaging the functions $\phi\circ\dist_{q_i}$ then gives

$$
\frac1N\sum_{i=1}^N \phi\circ\dist_{q_i},
$$

whose second derivative is bounded above by

$$
2\lambda-\frac{Mc_0}{N},
$$

and is therefore negative for sufficiently large $M$.

The nonsmooth proof follows exactly this scheme. Lemma \ref{deriv2} replaces the formal chain-rule computation by a barrier estimate, while Lemma~\ref{pi/2} supplies the required finite family of nonorthogonal directions.

\subsection{Barrier Second Derivatives }
We now formulate the preceding one-dimensional argument in the nonsmooth setting.

A smooth function $\bar u$, defined near $t_*\in I\subset\R$, is an \emph{upper barrier} for $u$ at $t_*$ if
$\bar u(t_*)=u(t_*)$
and $\bar u\ge u$ in a neighborhood of $t_*$. We write
$
u''(t_*)\le c
$
in the barrier sense if there exists such an upper barrier with
$$
\bar u''(t_*)\le c.
$$

We use the following standard one-dimensional characterization.

\begin{lem}[$\lambda$-concavity in the barrier sense]\label{lem}
A locally Lipschitz function $u:I\to\mathbb R$ is $\lambda$-concave if and only if
$$
u''(t_*)\le\lambda
$$
in the barrier sense at every interior point $t_*\in I$.
\end{lem}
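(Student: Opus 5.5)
The plan is to reduce to $\lambda=0$ and then prove each direction by a one-sided argument. Set $v(t)=u(t)-\frac{\lambda}{2}t^2$. A smooth function $\bar u$ is an upper barrier for $u$ at $t_*$ exactly when $\bar v=\bar u-\frac{\lambda}{2}t^2$ is an upper barrier for $v$ at $t_*$, and $\bar v''(t_*)=\bar u''(t_*)-\lambda$. It therefore suffices to show that a locally Lipschitz $v$ is concave if and only if $v''(t_*)\le 0$ in the barrier sense at every interior point.

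\emph{Concave $\Rightarrow$ barrier bound.} Fix an interior point $t_*$. A concave function has finite one-sided derivatives $v'_-(t_*)\ge v'_+(t_*)$. Pick any $s\in[v'_+(t_*),v'_-(t_*)]$. By concavity, $v$ lies below the supporting line
\[
\bar v(t)=v(t_*)+s(t-t_*).
\]
This line is a smooth upper barrier at $t_*$ with $\bar v''=0$.

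\emph{Barrier bound $\Rightarrow$ concave.} This is the main step; it is the usual maximum-principle argument. Suppose $v$ is not concave on some $[a,b]\subset I$, so that $v$ dips strictly below its chord $\ell$ at some point of $(a,b)$. For small $\delta>0$, compare $v$ with
\[
w(t)=\ell(t)+\delta(t-a)(b-t)-\eta,
\]
choosing the constant $\eta>0$ so that $w\ge v$ on $[a,b]$ with equality at some interior point $t_*$. Such a $t_*$ exists because $v-\ell-\delta(t-a)(b-t)$ vanishes at the endpoints and is negative somewhere inside once $\delta$ is small enough; we then take the minimum of $w-v$ over $[a,b]$, which is attained since $v$ is continuous. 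Now let $\bar v$ be the upper barrier at $t_*$ with $\bar v''(t_*)\le 0$. The difference $\bar v-w$ is smooth, satisfies $\bar v(t_*)=w(t_*)$, and has $(\bar v-w)''(t_*)\le 0-(-2\delta)=2\delta>0$. On the other hand, $w\ge v$ everywhere on $[a,b]$, $\bar v\ge v$ near $t_*$, and $w(t_*)=v(t_*)=\bar v(t_*)$.

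Here the sign conventions need care, so I would run the comparison as follows. The function $v-w$ is $\le 0$ with a maximum $0$ at $t_*$, and $v\le\bar v$. Hence near $t_*$ we have $w\ge v$ and $\bar v\ge v$, with all three touching at $t_*$. This configuration gives no contradiction: both $w$ and $\bar v$ lie above $v$. The fix is to touch from \emph{below}. Choose $w=\ell-\delta(t-a)(b-t)+\eta$ with $\eta\le 0$ minimal such that $w\le v$ on $[a,b]$, so that $w$ touches $v$ from below at an interior point $t_*$; interiority holds because at the endpoints $w=\ell+\eta\le \ell=v$, with strict inequality, after pushing $w$ up until it meets the dip. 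Then near $t_*$ we have $w\le v\le \bar v$, with equality at $t_*$. So $\bar v-w\ge 0$ has a local minimum at $t_*$, which forces $(\bar v-w)''(t_*)\ge 0$. But $w''=2\delta$, so $\bar v''(t_*)\ge 2\delta>0$, contradicting $\bar v''(t_*)\le 0$.

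The Lipschitz hypothesis only ensures continuity, which is what makes the extremum attained; it is not otherwise needed.
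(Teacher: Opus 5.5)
Your proof is correct. The paper gives no argument for this lemma; it only cites \cite[Theorem~3.14]{zbMATH07802912}. Your write-up is therefore a self-contained replacement for the citation, and it is the standard proof:
\begin{itemize}
\item The substitution $v=u-\frac{\lambda}{2}t^2$ reduces everything to $\lambda=0$, and this reduction is exact for the paper's convention, which asks for barriers with $\bar u''(t_*)\le\lambda$ and no slack.
\item Supporting lines give the \emph{only if} direction with an exact bound.
\item Touching $v$ from below at an interior point by the strictly convex function $w=\ell-\delta(t-a)(b-t)+\eta$ gives the \emph{if} direction.
\end{itemize}

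Compared with the bare citation, your version shows what is really used. First, only continuity of $u$ is needed. Some such hypothesis is genuinely required: a function that vanishes except for an upward jump at one point satisfies the barrier condition but is not concave. Second, you only need a barrier at $t_*$ with $\bar v''(t_*)<2\delta$. So the same proof also covers the variant of the definition that allows an arbitrarily small slack in the barrier bound. The citation, in turn, buys brevity.

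For the final write-up:
\begin{itemize}
\item Delete the abandoned first attempt that touches from above.
\item Make the choice of $\eta$ explicit: $\eta=\min_{[a,b]}\bigl(v-\ell+\delta(t-a)(b-t)\bigr)$. This is negative for small $\delta$, and it is the \emph{largest} constant (not the smallest) with $w\le v$ on $[a,b]$.
\item The contact point is then interior, because $v-w=-\eta>0$ at $a$ and at $b$.
\end{itemize}
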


See \cite[Theorem~3.14]{zbMATH07802912}.
 
The following lemma is the nonsmooth version of the chain-rule estimate above.

\begin{lem}[Barrier chain-rule estimate]\label{deriv2}
Let $u:(a,b)\to\R$ be a $\lambda$-concave function
with $\lambda>0$.
Let
$t^*\in(a,b), M>0$, and let $\phi:\R\to\R$ be smooth such that
$$
0<\phi'(u(t^*))<2,
\qquad
\phi''(u(t^*))\le -2M.
$$
Then
$$
(\phi\circ u)''(t^*)
<
2\lambda
-
M\Bigl(\max\{|u'_-(t^*)|,|u'_+(t^*)|\}\Bigr)^2
$$
in the barrier sense at $t^*$.
\end{lem}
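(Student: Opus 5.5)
The plan is to produce an explicit smooth upper barrier for $\phi\circ u$ at $t^*$ by composing $\phi$ with a quadratic barrier for $u$, and then reading off its second derivative. Write $u_0=u(t^*)$, $\ell_-=u'_-(t^*)$, $\ell_+=u'_+(t^*)$. These one-sided derivatives exist because $u-\frac{\lambda}{2}t^2$ is concave, and concavity gives $\ell_+\le\ell_-$. For any slope $s\in[\ell_+,\ell_-]$, concavity of $u-\frac{\lambda}{2}(t-t^*)^2$ implies that the supporting line at $t^*$ with slope $s$ lies above it. Hence
\[
\bar u_s(t):=u_0+s(t-t^*)+\tfrac{\lambda}{2}(t-t^*)^2\ \ge\ u(t)
\]
near $t^*$, with equality at $t^*$. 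This is the step where Lemma~\ref{lem} would give only some barrier; here we need control on the slope, so we use the explicit supporting-line barrier.

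Since $\phi'(u_0)>0$ and $\phi$ is smooth, $\phi$ is increasing on a neighbourhood of $u_0$. By continuity of $u$, $\phi\circ\bar u_s\ge\phi\circ u$ near $t^*$, with equality at $t^*$. So $\phi\circ\bar u_s$ is a smooth upper barrier for $\phi\circ u$ at $t^*$. The ordinary chain rule gives
\[
(\phi\circ\bar u_s)''(t^*)=\phi'(u_0)\,\lambda+\phi''(u_0)\,s^2\le \phi'(u_0)\lambda-2Ms^2<2\lambda-2Ms^2,
\]
using $\phi'(u_0)<2$ and $\lambda>0$. The inequality is strict because $\phi'(u_0)\lambda<2\lambda$.

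It remains to choose $s$ so that $2s^2\ge\max\{|\ell_-|,|\ell_+|\}^2$. This is the only delicate point, since we may only use slopes in $[\ell_+,\ell_-]$. If $|\ell_-|\ge|\ell_+|$, take $s=\ell_-$; otherwise take $s=\ell_+$. In either case $s^2=\max\{|\ell_-|,|\ell_+|\}^2$, so $-2Ms^2\le -M\max\{\cdot\}^2$. Both endpoints are admissible slopes, because the supporting-line argument works for any $s$ in the closed superdifferential $[\ell_+,\ell_-]$ of the concave function $u-\frac{\lambda}{2}(t-t^*)^2$ at $t^*$. This yields the claimed strict barrier bound.

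The main obstacle is this last step: justifying that the barrier can be taken with slope equal to the larger one-sided derivative in absolute value. If one only used Lemma~\ref{lem} abstractly, one would get a barrier with an uncontrolled slope, and the averaged slope $(\ell_-+\ell_+)/2$ could be near zero. That is why I would use the explicit supporting lines of the concave function $u-\frac{\lambda}{2}(t-t^*)^2$ at the endpoints of its superdifferential. The factor $2$ in $\phi''\le-2M$ leaves slack, so even a cruder choice of slope would suffice.
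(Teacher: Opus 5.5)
Your proof is correct, and it takes a slightly different route from the paper's.

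The paper splits into two cases. If $u'_+(t^*)<u'_-(t^*)$, it argues that $\phi\circ u$ has a strict downward corner at $t^*$; choosing a slope strictly between its one-sided derivatives then gives upper barriers with arbitrarily negative second derivative. If $u'_+(t^*)=u'_-(t^*)$, it takes an abstract barrier $u_*$ with $u_*''(t^*)\le\lambda$ from Lemma~\ref{lem}, observes that its slope is forced to equal $u'(t^*)$, and applies the chain rule to $\phi\circ u_*$.

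You instead write down a single explicit barrier $u_0+s(t-t^*)+\frac{\lambda}{2}(t-t^*)^2$. Here $s$ is the endpoint of the superdifferential $[\ell_+,\ell_-]$ of the concave function $u-\frac{\lambda}{2}(t-t^*)^2$ with the larger absolute value. This has three advantages:
\begin{enumerate}
\item It treats both cases uniformly.
\item It uses only the supporting-line property of concave functions, not the cited barrier characterization, nor the implicit fact that a barrier touching at a point of differentiability must have matching slope.
\item It gives the slightly stronger bound $2\lambda-2M\max\{|\ell_-|,|\ell_+|\}^2$.
\end{enumerate}
In return, the paper's case split gives a stronger conclusion at a corner, namely barriers with arbitrarily negative second derivative. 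Neither the lemma nor its use in the proof of Proposition~\ref{Prop:Existence of strictly concave functions} needs this.
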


\begin{proof} Denote by $
u'_+(t^*)\le u'_-(t^*)
$ the one-sided derivatives of $u$. 
Since $u$ is $\lambda$-concave, one has
$
u'_+(t^*)\le u'_-(t^*).
$
 If $u'_+(t^*)<u'_-(t^*)$, then, since $\phi'>0$,
$\phi\circ u$ has a strict downward corner at $t^*$. Choosing a slope
strictly between its one-sided derivatives shows that it admits upper
barriers with arbitrarily negative second derivative.

Assume now that
$
u'_+(t^*)=u'_-(t^*)=:u'(t^*).
$
Choose a smooth upper barrier $u_*\ge u$ at $t^*$ such that
$
u_*''(t^*)\le\lambda.
$
Since $\phi'(u(t^*))>0$, the function $\phi$ is increasing near
$u(t^*)$, and therefore $\phi\circ u_*$ is an upper barrier for
$\phi\circ u$. Moreover,
$
u_*'(t^*)=u'(t^*).
$
Thus
$$
\begin{aligned}
(\phi\circ u_*)''(t^*)
&=
\phi'(u(t^*))u_*''(t^*)
+
\phi''(u(t^*))(u_*'(t^*))^2\\
&<
2\lambda-M(u'(t^*))^2,
\end{aligned}
$$
which proves the claim.
\end{proof}

\iffalse

\fi
\subsection{A finite family of uniformly nonorthogonal directions}

The geometric ingredient is the following uniform nonorthogonality
statement.

\begin{lem}[Finite nonorthogonal family]\label{pi/2}
Let $X^n$ be an Alexandrov region and $p\in X$.
For every sufficiently small $r>0$ there exist
$\xi>0$, $\varepsilon>0$, and points
$q_1,\dots,q_N\in S_r(p)$ with the following property.
For every $x\in B_\varepsilon(p)$ and every direction
$v_x\in\Sigma_x$, there exist $i\in\{1,\dots,N\}$ and a
minimizing geodesic from $x$ to $q_i$ such that
$
\left|
\frac{\pi}{2}-\sphericalangle(v_x,\uparrow_x^{q_i})
\right|>\xi.
$

The points $q_i$ may be chosen so that the minimizing geodesics
$pq_i$ are unique. Moreover, given $v_p\in\Sigma_p$ and $\nu>0$,
they may be chosen so that
$
\sphericalangle(\uparrow_p^{q_i},v_p)<\nu
\quad\text{for all }i.
$
\end{lem}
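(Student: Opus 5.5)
We work first at $p$ and then transfer the estimate to nearby points by a contradiction argument based on semicontinuity of angles.

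\emph{Step 1: the statement at $p$ itself.} Fix a working radius at $p$ and $r$ much smaller than it. The space of directions $\Sigma_p$ is compact by \autoref{Prop: Space of directions is compact}. Directions $\uparrow_p^q$ with $q\in S_r(p)$ and $pq$ unique are dense in $\Sigma_p$. For a.e.\ $q$ (in the sense of Hausdorff measure) the minimizing geodesic $pq$ is unique; alternatively we may take $q$ in the interior of a geodesic from $p$ that is extended slightly beyond $q$. We prescribe the $q_i$ as follows. Choose finitely many unique-geodesic directions $w_1,\dots,w_N$ whose angle to $v_p$ is less than $\nu$. They should be ``spread'' in the sense that no $u\in\Sigma_p$ is at angle within $2\xi$ of $\pi/2$ from all of them.

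The following gives such a family. Pick one direction $w_1$ close to $v_p$. For any $u$ with $\sphericalangle(u,w_1)\approx\pi/2$, add a direction $w'$ close to $w_1$ but tilted toward $u$. Concretely, take a point on a geodesic in direction $u$ at a small distance $\delta$, and a point on the $w_1$-geodesic at distance $\delta/\nu$. A short geodesic between them gives, in the tangent cone $T_p=\operatorname{Cone}(\Sigma_p)$, a direction within $\nu$ of $w_1$ whose angle to $u$ is at most $\pi/2-c\nu$; this uses the cone comparison of \autoref{Cor: Tangent space and space of directions are Alexandrov spaces}. Covering the compact set $\{u:|\sphericalangle(u,w_1)-\pi/2|\le\xi_0\}$ by finitely many balls of radius $\ll\nu$ yields finitely many $w_j$. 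By the triangle inequality for angles and a small choice of $\xi$, every $u\in\Sigma_p$ then has some $w_j$ with $|\pi/2-\sphericalangle(u,w_j)|>2\xi$. Finally we pick $q_j\in S_r(p)$ along unique geodesics in (approximately) these directions.

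\emph{Step 2: passing to $x\in B_\varepsilon(p)$ by contradiction.} Suppose there are $x_k\to p$ and $v_k\in\Sigma_{x_k}$ with $|\pi/2-\sphericalangle(v_k,\uparrow_{x_k}^{q_i})|\le\xi$ for all $i$ and all minimizing geodesics. Since $d_{x_k}\dist_{q_i}(v_k)=-\cos\sphericalangle(v_k,\uparrow_{x_k}^{q_i})$ by the first variation formula, this means $|d_{x_k}\dist_{q_i}(v_k)|\le\sin\xi$ for every $i$. Realize $v_k$ approximately by a short geodesic $\gamma_k$ from $x_k$ of length $\ell_k$, with $\ell_k\to0$ but $\ell_k\gg|px_k|$. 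Rescale by $\ell_k^{-1}$. By \autoref{Lemma: Convergence to tangent space}, $(\ell_k^{-1}X,p)\to T_p$, the points $x_k$ converge to $o_p$, and the endpoints of $\gamma_k$ converge to some unit vector $u\in T_p$. The functions $\ell_k^{-1}(\dist_{q_i}-\dist_{q_i}(p))$ converge to $d_p\dist_{q_i}=-\langle\uparrow_p^{q_i},\cdot\rangle$; here uniqueness of $pq_i$ is used. Semiconcavity of $\dist_{q_i}$ (uniform $\lambda$-concavity on the working ball) controls the difference between the derivative at $x_k$ along $\gamma_k$ and the difference quotient over length $\ell_k$ by $\lambda\ell_k\to0$ in one direction. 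Upper semicontinuity of the differentials of semiconcave functions gives the other. In the limit we get $|\cos\sphericalangle(u,\uparrow_p^{q_i})|\le\sin\xi$ for all $i$, i.e.\ $|\pi/2-\sphericalangle(u,w_i)|\le\xi$, which contradicts Step 1.

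\emph{Main obstacle.} The delicate point is the two-sided control in Step 2. Semiconcavity gives only one inequality for the directional derivative versus the difference quotient. For the reverse inequality I would exploit the freedom to choose $\gamma_k$ extending in both directions only approximately. Alternatively, I would apply the argument to both $\pm$-type directions, using that $|\cos|$ is symmetric and that the set of $i$ used in Step 1 can be doubled, with $w_j$ paired so that nonorthogonality is detected with either sign.
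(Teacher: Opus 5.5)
Your Step 1 is fine: tilting $w_1$ along a geodesic of $\Sigma_p$ toward $u$ does what you want. Step 2, however, has a genuine gap. It must turn an arbitrary $v_k\in\Sigma_{x_k}$ into some $u\in\Sigma_p$ satisfying the two-sided bound $|\pi/2-\sphericalangle(u,w_i)|\le\xi$, and neither half of this is available.

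\emph{Approximating $v_k$ by long geodesics fails.} In general $v_k$ cannot be approximated by directions of minimizing geodesics of length $\ell_k\gg|px_k|$. Take $\R^2/\{\pm1\}$ with $p$ the vertex and $v_k=\uparrow_{x_k}^p$. A geodesic leaving $x_k$ at angle $\eta<\pi/2$ from $v_k$ stops minimizing after length $|px_k|/\cos\eta$. Here $\Sigma_{x_k}$ is a circle of length $2\pi$ and $\Sigma_p$ one of length $\pi$, so directions at $x_k$ have in general no counterpart at $p$. These directions are exactly the relevant ones: if $\uparrow_p^{x_k}$ is at angle $\pi/2$ from $v_p$, then $v_k$ is orthogonal to every $\uparrow_{x_k}^{q_i}$ up to an error $\sphericalangle(w_i,v_p)+o(1)$.

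\emph{The limit gives only a one-sided bound.} Even when $\gamma_k$ converges to a segment $o_pu$, the chord-slope inequality for concave functions yields only $d_p\dist_{q_i}(u)\le\liminf_k d_{x_k}\dist_{q_i}(v_k)$, i.e.\ $\sphericalangle(u,w_i)\le\pi/2+\xi$. The reverse inequality, your ``upper semicontinuity of differentials'', is false. In the cone of angle $\pi/2$, take $u,w$ at angle $\epsilon$ from $v_p$ on either side, and $x_k$ on the ray at angle $\pi/4$ from $v_p$. Then $\sphericalangle(u,w)=2\epsilon$, whereas at the regular points $x_k$ the corresponding directions make angle tending to $\pi/2-2\epsilon$. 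Your sign-detecting fallback cannot repair this. Every $w_j$ lies within $\nu$ of $v_p$, so for $u=v_p$ all $\sphericalangle(u,w_j)<\nu<\pi/2$, and no $w_j$ can lie beyond $\pi/2+2\xi$.

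\emph{The missing idea.} Transport from $p$ to $x$ only what is stable, namely lower bounds on angles, and then argue intrinsically in $\Sigma_x$.
\begin{itemize}
\item If $\sphericalangle_\kappa q_ipq_j>\frac32\delta$, then by continuity of comparison angles $\sphericalangle(\uparrow_x^{q_i},\uparrow_x^{q_j})\ge\sphericalangle_\kappa q_ixq_j>\delta$ for all $x$ near $p$ and all choices of geodesics.
\item Every $\Sigma_x$ is an $(n-1)$-dimensional space of curvature $\ge1$. Hence the band $\{w:\bigl|\,|v_xw|-\pi/2\,\bigr|\le\delta/10\}$ has $\delta$-packing number at most $C(n)\delta^{-(n-2)}$, uniformly in $x$ and $v_x$.
\item On the other hand, $B_{\nu/2}(v_p)\subset\Sigma_p$ contains at least $c\,\delta^{-(n-1)}$ directions that are $2\delta$-separated.
\end{itemize}
Choosing more than $C(n)\delta^{-(n-2)}$ such $q_i$ forces, for every $v_x$, some $\uparrow_x^{q_i}$ outside the band, which gives $\xi=\delta/10$. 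This is the paper's argument. A family that is merely nonorthogonal at $p$, as in your Step 1, carries no such count and so gives no control at nearby points.
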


The proof is a dimension-counting argument. A small ball in the
$(n-1)$-dimensional space $\Sigma_p$ contains on the order of
$\delta^{-(n-1)}$ separated directions, whereas a
$\delta$-neighborhood of the $\pi/2$-level set of the distance
from a fixed direction contains at most on the order of
$\delta^{-(n-2)}$ such directions.

For a metric space $Y$ and $\delta>0$, denote by
$\operatorname{pack}_\delta(Y)$ the supremum of the cardinalities
of $\delta$-separated subsets of $Y$.

\begin{lem}[Packing estimates]\label{lem:packing-estimates}
Let $\Sigma$ be a compact $m$-dimensional Alexandrov space with
curvature bounded below by $1$.

\begin{enumerate}
\item
For every $v\in\Sigma$ and every $\nu>0$, there exists
$c=c(\Sigma,v,\nu)>0$ such that
\[
\operatorname{pack}_\delta B_\nu(v)
\ge c\,\delta^{-m}
\]
for every sufficiently small $\delta>0$.

\item
If $m\ge1$, there exists $C=C(m)$ such that for every $v\in\Sigma$
and every sufficiently small $\delta>0$,
\[
\operatorname{pack}_\delta
\left\{
w\in\Sigma:
\left||vw|-\frac{\pi}{2}\right|
\le\frac{\delta}{10}
\right\}
\le C\,\delta^{-(m-1)}.
\]
\end{enumerate}
\end{lem}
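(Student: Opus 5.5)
For part (1) I would use the Bishop--Gromov volume comparison for $\Sigma$ together with positivity of $\mathcal H^m$ on open sets. The space $\Sigma$ is a compact $m$-dimensional Alexandrov space, so $0<\mathcal H^m(\Sigma)<\infty$, and every nonempty open subset has positive $\mathcal H^m$-measure; in particular $V:=\mathcal H^m(B_{\nu/2}(v))>0$. Let $\{w_j\}$ be a maximal $\delta$-separated subset of $B_\nu(v)$ with $\delta<\nu/2$. Its $\delta$-balls cover $B_{\nu/2}(v)$, since any point of $B_{\nu/2}(v)$ lies in $B_\nu(v)$ and maximality puts it within $\delta$ of some $w_j$. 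By the Bishop--Gromov inequality for curvature $\ge 1$, each ball satisfies $\mathcal H^m(B_\delta(w_j))\le C_m\,\omega\,\delta^m$, where the constant is uniform for small $\delta$; equivalently, $\mathcal H^m(B_\delta(w))\le \mathcal H^m(\Sigma)\cdot \operatorname{vol}_{\mathbb S^m}(B_\delta)/\operatorname{vol}(\mathbb S^m)$, which is $\le C'\delta^m$. Hence the number of points is at least $V/(C'\delta^m)$, so $c=V/C'$ works.

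For part (2) I would reduce to a packing bound on a distance sphere, using the gradient flow of $\dist_v$ (or radial curves) to project the thin annulus onto the level set $S:=\{w:|vw|=\pi/2\}$. The thin annulus $\{||vw|-\pi/2|\le\delta/10\}$ is within distance $\delta/10$ of $S$ along minimizing geodesics from $v$ (extended if needed, or geodesics toward $v$), so a $\delta$-separated set in the annulus gives a $(\delta/2)$-separated subset of $S$ after moving each point along a shortest path toward or away from $v$ by at most $\delta/10$. Moving away from $v$ is the delicate direction, since geodesics need not extend. In that case I would instead move all points \emph{toward} $v$ to the level $\pi/2-\delta/10$, which is always possible along minimizers. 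The error is then $\le\delta/5$, so the images stay $3\delta/5$-separated.

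It then suffices to bound $\operatorname{pack}_{\delta/2}$ of a level set $\{|vw|=t\}$ by $C(m)\delta^{-(m-1)}$, with $t$ near $\pi/2$. Here I would use the comparison for the distance from $v$ in curvature $\ge1$. The map $\log_v$ sends the level set $\{|vw|=t\}$, in a noncontracting way up to the factor $\sin t/\sin t$ (which is fine near $\pi/2$), into the sphere of radius $t$ in $\operatorname{Cone}_1(\Sigma_v)$, that is, a scaled copy of $\Sigma_v\Sigma$. The space $\Sigma_v\Sigma$ is an $(m-1)$-dimensional Alexandrov space with curvature $\ge1$, so by Bishop--Gromov its $\delta$-packing number is at most $C(m)\delta^{-(m-1)}$, uniformly, via volume comparison with the unit sphere. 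Noncontraction of the $\kappa$-logarithm ($\kappa=1$, by hinge comparison) turns separated points into separated directions with a factor $\sin t\ge 1/2$, which yields the bound. When $m=1$, $\Sigma_v$ has at most two points, and the bound is a constant.

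The main obstacle I expect is making the noncontraction uniform. Comparison gives $|w_1w_2|\le\tilde d(\sphericalangle,|vw_1|,|vw_2|)$, so separated points produce separated directions. This works only if the minimizers from $v$ are used consistently; for non-unique minimizers I would pick one arbitrarily, which does no harm since the inequality holds for any choice.
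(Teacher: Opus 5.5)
\textbf{Part (1).} This is the paper's argument: a maximal separated set together with volume comparison. One slip: the inequality you present as equivalent, $\mathcal H^m(B_\delta(w))\le \mathcal H^m(\Sigma)\operatorname{vol}(B_\delta)/\operatorname{vol}(\mathbb S^m)$, goes the wrong way. Relative (Bishop--Gromov) comparison gives the reverse inequality $\mathcal H^m(B_\delta(w))\ge \mathcal H^m(\Sigma)V_{1,m}(\delta)/V_{1,m}(\pi)$. Your version fails, for example, at a regular point of $\mathbb S^2/\mathbb Z_k$. What you need is the absolute bound $\mathcal H^m(B_\delta(w))\le V_{1,m}(\delta)$, which you also state, so the argument stands.

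\textbf{Part (2).} Your route is correct but differs from the paper's. The paper takes a noncontracting map $f\colon\Sigma\to\mathbb S^m$ and counts points in the equatorial band of the round sphere. This tacitly needs $f$ to send the band into the band, i.e.\ to preserve $|vw|$. That holds for the standard construction (the spherical suspension of a noncontracting $g\colon\Sigma_v\to\mathbb S^{m-1}$, composed with $\log_v$), but the paper does not say so.

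You stop one step earlier, at $\Sigma_v$. You push points along minimizers to the common level $t=\pi/2-\delta/10$, losing at most $2\delta/5$ in separation. Hinge comparison then gives $\sin(|w_1w_2|/2)\le\sin t\,\sin(\sphericalangle w_1vw_2/2)$, so $\sphericalangle w_1vw_2\ge|w_1w_2|$ and the directions at $v$ are $3\delta/5$-separated. This is cleaner than your ``factor $\sin t\ge 1/2$.'' The count is thereby transferred to the $(m-1)$-dimensional space $\Sigma_v$.

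Your version makes explicit why the band behaves like an $(m-1)$-dimensional object and needs no global map to $\mathbb S^m$. The paper's version ends in an explicit computation on the round sphere.

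For the final bound on $\operatorname{pack}_s\Sigma_v$, use the correct direction of relative comparison:
\[
\mathcal H^{m-1}(B_{s/2}(u))\ge\mathcal H^{m-1}(\Sigma_v)\,\frac{V_{1,m-1}(s/2)}{V_{1,m-1}(\pi)}.
\]
This yields $\operatorname{pack}_s\Sigma_v\le V_{1,m-1}(\pi)/V_{1,m-1}(s/2)\le C(m)s^{-(m-1)}$, independently of $\Sigma_v$. Alternatively, compose with a noncontracting map $\Sigma_v\to\mathbb S^{m-1}$.
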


\begin{proof}
(1) Choose a maximal $\delta$-separated set
$T\subset B_{\nu/2}(v)$. By maximality, the balls
$B_\delta(w)$, $w\in T$, cover $B_{\nu/2}(v)$.
The absolute volume comparison theorem gives
\[
\mathcal H^m(B_\delta(w))
\le V_{1,m}(\delta)
\le C_0(m)\delta^m,
\]
where $V_{1,m}(\delta)$ denotes the volume of a ball of radius $\delta$ in the comparison space of dimension $m$ and constant curvature $1.$
Similarly, if $\# T$ denotes the cardinality of the set $T$, we obtain
\[
0<\mathcal H^m(B_{\nu/2}(v))
\le
\#T\,C_0(m)\delta^m,
\]
which proves (1).

(2) Observe that the statement is true in the case, where $\Sigma$ is the $m$-dimensional sphere. By \cite[Proposition 10.6.10]{MR1835418} there exists a noncontracting map $f\colon \Sigma \rightarrow S^m(1)$. From this the claim follows.
\end{proof}
\begin{proof}[Proof of Lemma \ref{pi/2}.]
Fix $v_p\in\Sigma_p$ and $\nu>0$. The case $n=1$ is immediate.
Let $c>0$ and $C=C(n-1)$ be the constants in
Lemma~\ref{lem:packing-estimates}. Choose $\delta>0$ so small that
$
c(2\delta)^{-(n-1)}>C\delta^{-(n-2)}.
$
Then there is a $2\delta$-separated family
$
w_1,\dots,w_N\in B_{\nu/2}(v_p)$, for some
$N>C\delta^{-(n-2)}.
$
After a small perturbation, assume that the $w_i$ are represented by
unique minimizing geodesics from $p$. For sufficiently small $r>0$,
let $q_i\in S_r(p)$ be the corresponding points.
The definition of angle implies, after decreasing $r$ if necessary,
that
$
\sphericalangle_\kappa q_i p q_j
>
\frac 32\delta$ for
$i\ne j.
$
By continuity of comparison angles, after shrinking
$\varepsilon>0$, we have
$
\sphericalangle_\kappa q_i x q_j>\delta$
for every $x\in B_\varepsilon(p)$ and every $i\ne j$.
Then the directions
$\uparrow_x^{q_i}$ remain $\delta$-separated for every
$x\in B_\varepsilon(p)$. By
Lemma~\ref{lem:packing-estimates} (2), for every $v_x\in\Sigma_x$
at most $C\delta^{-(n-2)}$ of them satisfy
$
\left|\frac{\pi}{2}
-\sphericalangle(v_x,\uparrow_x^{q_i})\right|
\le\frac{\delta}{10}.
$
Hence for some $i$ we have
$
\left|\frac{\pi}{2}
-\sphericalangle(v_x,\uparrow_x^{q_i})\right|
>\frac{\delta}{10}.
$
The claim follows with $\xi=\delta/10$.
\end{proof}

\subsection{Proof of the Proposition}

\begin{proof}[Proof of Proposition \ref{Prop:Existence of strictly concave functions}.]
Choose $r>0$, points $q_1,\dots,q_N\in S_r(p)$, and constants
$\varepsilon_0,\xi>0$ as in Lemma~\ref{pi/2}. Shrinking
$\varepsilon_0$ if necessary, choose $\lambda>0$ so that all functions
$\dist_{q_i}$ are $\lambda$-concave on $B_{\varepsilon_0}(p)$. Choose
$M>(2\lambda+1)N/\sin^2\xi$.

Consider
$\phi_M(s)=(s-r)-M(s-r)^2$.
Then $\phi_M(r)=0$, $\phi_M'(r)=1$, and
$0<\phi_M'(s)<2$, $\phi_M''(s)=-2M$ on
$[r-\frac1{4M},r+\frac1{4M}]$.
Set $\varepsilon=\min\{\varepsilon_0,\frac1{4M}\}$. Since
$q_i\in S_r(p)$, for every $x\in B_\varepsilon(p)$ we have
$|\dist_{q_i}(x)-r|\le |px|<\varepsilon$. Hence
$\dist_{q_i}(B_\varepsilon(p))\subset
[r-\frac1{4M},r+\frac1{4M}]$.
Set

$$
f=\frac1N\sum_{i=1}^N\phi_M\circ\dist_{q_i}.
$$

We claim that $f$ is $(-1)$-concave on $B_\varepsilon(p)$.

Let $\gamma$ be a unit-speed geodesic contained in $B_\varepsilon(p)$
and let $x=\gamma(t^*)$ be an interior point. Consider
$v \in T_xX^n$, where $v$ is the tangent vector in the direction of $\gamma$. By Lemma~\ref{pi/2}, there exist an index $i^*$ and
a direction $w$ representing a path from $x$ to $q_{i*}$ such that
$|\frac\pi2-\sphericalangle(v,w)|>\xi$.
If $\sphericalangle(v,w)<\pi/2-\xi$, we apply the first variation formula in the
forward direction of $\gamma$; if $\sphericalangle(v,w)>\pi/2+\xi$, we apply it
in the opposite direction. Thus

$$
\max\left\{
\left|(\dist_{q_{i^*}}\circ\gamma)'_-(t^*)\right|,
\left|(\dist_{q_{i^*}}\circ\gamma)'_+(t^*)\right|
\right\}>\sin\xi.
$$

Applying Lemma~\ref{deriv2} to
$u_i=\dist_{q_i}\circ\gamma$, we obtain
$(\phi_M\circ\dist_{q_{i^*}}\circ\gamma)''(t^*)
<2\lambda-M\sin^2\xi$
in the barrier sense, while for every $i\ne i^*$,
$(\phi_M\circ\dist_{q_i}\circ\gamma)''(t^*)<2\lambda$.
Averaging the corresponding upper barriers gives

$$
(f\circ\gamma)''(t^*)
<
\frac{2\lambda N-M\sin^2\xi}{N}
<-1.
$$

Hence $f$ is $(-1)$-concave on $B_\varepsilon(p)$.

It remains to control the differential at $p$. Given a prescribed
direction $v_p\in\Sigma_p$ and $\nu>0$, choose the points $q_i$ in
Lemma~\ref{pi/2} so that
$\sphericalangle(\uparrow_p^{q_i},v_p)<\nu$ for all $i$.
Since $|pq_i|=r$ and $\phi_M'(r)=1$, we have
$d_p(\phi_M\circ\dist_{q_i})=d_p\dist_{q_i}$, and therefore

$$
d_pf=\frac1N\sum_{i=1}^N d_p\dist_{q_i}.
$$

By the first variation formula,
$d_p\dist_{q_i}(v)=-|v|\cos\sphericalangle(v,\uparrow_p^{q_i})$.
Therefore, as $\nu\to0$, $d_pf$ converges uniformly on $\Sigma_p$ to
$v\mapsto-|v|\cos\sphericalangle(v,v_p)$.
Scaling $f$ by $|v_0|$ gives the required approximation to
$v\mapsto-\langle v_0,v\rangle$.
\end{proof}

\begin{cor}\label{maximum}
There exists a strictly concave function defined in a neighborhood of $p$
which attains a strict local maximum at $p$.
\end{cor}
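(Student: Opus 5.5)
The idea is to take the minimum of finitely many strictly concave functions from Proposition~\ref{Prop:Existence of strictly concave functions}. Their differentials at $p$ will point ``away from'' a fine net of directions in $\Sigma_p$. A minimum of $(-1)$-concave functions is again $(-1)$-concave, so strict concavity is preserved. The net will force every direction at $p$ to be a direction of strict decrease for at least one of the functions.

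\textbf{Step 1: choose a net and the functions.} Fix a small angle $\theta>0$, say $\theta<\pi/6$, so that $\cos\theta>1/2$. By \autoref{Prop: Space of directions is compact}, $\Sigma_p$ is compact, so we can pick a finite $\theta$-net $v_1,\dots,v_K\in\Sigma_p$. For each $j$, apply Proposition~\ref{Prop:Existence of strictly concave functions} with the unit vector $v_j$. This gives a function $f_j$ on a neighborhood $B_{\varepsilon_j}(p)$ with two properties:
\begin{itemize}
\item $f_j$ is $(-1)$-concave there, as in the proof of that proposition;
\item its differential satisfies $\sup_{w\in\Sigma_p}\bigl|d_pf_j(w)+\cos\sphericalangle(w,v_j)\bigr|<\tfrac14$.
\end{itemize}
Set $\varepsilon=\min_j\varepsilon_j$, shrunk further so that $B_{\varepsilon}(p)$ lies well inside a working ball. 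Then any two points of a smaller ball $B_{\varepsilon'}(p)$ are joined by minimizing geodesics staying in $B_\varepsilon(p)$. Define
\[
g=\min_{1\le j\le K}\bigl(f_j-f_j(p)\bigr)\quad\text{on } B_{\varepsilon'}(p).
\]
Along any unit-speed geodesic, each $f_j\circ\gamma+\tfrac12t^2$ is concave. A minimum of concave functions is concave, so $g$ is $(-1)$-concave, i.e.\ strictly concave. Also $g(p)=0$.

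\textbf{Step 2: strict maximum at $p$.} Let $x\in B_{\varepsilon'}(p)\setminus\{p\}$ and let $\gamma$ be a unit-speed minimizing geodesic from $p$ to $x$, with direction $w=\uparrow_p^x$ and length $s=|px|$. Choose $j$ with $\sphericalangle(w,v_j)<\theta$. The function $h=f_j\circ\gamma$ is $(-1)$-concave on $[0,s]$ and satisfies $h^+(0)=d_pf_j(w)$. Concavity of $h(t)+\tfrac12t^2$ gives
\[
f_j(x)-f_j(p)\le s\,d_pf_j(w)-\tfrac12 s^2.
\]
By the choice of $f_j$,
\[
d_pf_j(w)<-\cos\theta+\tfrac14<0.
\]
Hence $g(x)\le f_j(x)-f_j(p)<0=g(p)$, so $p$ is a strict maximum of $g$. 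In fact we get the quantitative bound $g(x)\le -\tfrac14|px|$.

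\textbf{Main obstacle.} The delicate point is the one-sided inequality $h(s)\le h(0)+s\,h^+(0)-\tfrac12s^2$. This requires that the right derivative of $f_j\circ\gamma$ at $0$ is exactly $d_pf_j(\uparrow_p^x)$, and that $\gamma$ stays in the region where $f_j$ is $(-1)$-concave. Both follow from the definition of the differential for semiconcave functions and from choosing $\varepsilon'$ small relative to the working radius.

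A second point is that the approximation in Proposition~\ref{Prop:Existence of strictly concave functions} must be uniform on $\Sigma_p$. Its proof gives exactly this: $d_pf\to -\cos\sphericalangle(\cdot,v_p)$ uniformly as $\nu\to0$. Uniformity is what lets one finite net handle all directions simultaneously.
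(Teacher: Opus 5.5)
Your proof is correct and follows the paper's argument: you take the minimum of finitely many $(-1)$-concave functions from Proposition~\ref{Prop:Existence of strictly concave functions}, whose differentials approximate $-\cos\sphericalangle(\cdot,v_j)$ over a finite net in $\Sigma_p$. The only difference is that you obtain the strict maximum directly from the concavity inequality along a geodesic from $p$ for the one relevant $f_j$. This avoids the paper's use of $d_pF=\min_j d_pf_j$ and gives the explicit bound $g(x)\le-\tfrac14|px|$.
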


\begin{proof}
Choose $\delta\in(0,\pi/2)$ and a finite $\delta$-net
$v_1,\dots,v_m\subset\Sigma_p$. For each $v_j$, apply Proposition \ref{Prop:Existence of strictly concave functions}
and, after adding a constant, obtain a $(-1)$-concave function $f_j$
with $f_j(p)=0$ and
$
|d_pf_j(v)+\cos\sphericalangle(v,v_j)|<\cos\delta
$
for all $v\in\Sigma_p$.

Set $F=\min_j f_j$. Since all $f_j$ are $(-1)$-concave, so is $F$, and
$d_pF(v)=\min_jd_pf_j(v)$. For every $v\in\Sigma_p$, choose $j$ with
$\sphericalangle(v,v_j)<\delta$. Then
$$
d_pF(v)\le d_pf_j(v)<-\cos\sphericalangle(v,v_j)+\cos\delta<0.
$$
Hence $p$ is a strict local maximum of $F$.
\end{proof}

\begin{lem}[Arbitrary small convex neighborhoods]\label{strict-max-convex-neighborhoods}
Let $X$ be an Alexandrov region and let $F$ be a continuous concave
function defined near $p\in X$. If $F$ has a strict local maximum at
$p$, then $p$ admits arbitrarily small convex neighborhoods.
\end{lem}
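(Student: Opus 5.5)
The plan is to use the superlevel sets of $F$ as the convex neighborhoods. Fix a small $\rho>0$ so that $F$ is defined and concave on $\overline B_{2\rho}(p)$. We may take $\rho$ below a working radius at $p$, so that $\overline B_{2\rho}(p)$ is compact, any two points of $B_\rho(p)$ are joined by minimizing geodesics, and these geodesics stay in $B_{2\rho}(p)$. Shrinking $\rho$ further, we may assume $p$ is the unique maximum point of $F$ on $\overline B_\rho(p)$. By compactness of the sphere $S_\rho(p)=\{x:|px|=\rho\}$ and continuity of $F$,
\[
m:=\max_{S_\rho(p)}F<F(p).
\]
(If $S_\rho(p)$ is empty, use a slightly smaller radius or treat the trivial case directly.) For $t\in(m,F(p))$ we set
\[
U_t:=\{x\in B_\rho(p): F(x)>t\}.
\]
This set is open and contains $p$. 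Since $F(x)\to F(p)$ as $x\to p$ and the maximum at $p$ is strict on the compact ball, for $t$ close to $F(p)$ the set $U_t$ lies in any prescribed $B_\sigma(p)$. Indeed, $\max F$ on the compact annulus $\sigma\le|px|\le\rho$ is $<F(p)$, so it suffices to take $t$ above that maximum.

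The main step is convexity of $U_t$: every minimizing geodesic $\gamma$ of $X$ joining $x,y\in U_t$ lies in $U_t$. Since $|xy|<2\rho$, the geodesic $\gamma$ stays in $B_{2\rho}(p)$, where $F$ is concave. We use a continuity (first-exit) argument. Let $s_0$ be the supremum of times $s$ such that $\gamma([0,s])\subset B_\rho(p)$, and suppose $s_0$ is less than the length of $\gamma$. On $[0,s_0]$ the function $F\circ\gamma$ is concave, hence bounded below by $\min\{F(x),F(\gamma(s_0))\}$. Here we use concavity of $F$ along the whole geodesic in $B_{2\rho}(p)$, not just on $[0,s_0]$: $F\circ\gamma$ is concave on the full interval, so
\[
F\circ\gamma\ge\min\{F(x),F(y)\}>t>m .
\]
But $\gamma(s_0)\in S_\rho(p)$ forces $F(\gamma(s_0))\le m$, a contradiction. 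Hence $\gamma$ stays in $B_\rho(p)$, and by the same inequality $F>t$ along $\gamma$, so $\gamma\subset U_t$.

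I expect the main obstacle to be the bookkeeping of the domain of concavity: ensuring $F$ is concave along the entire geodesic, which may a priori leave $B_\rho(p)$. This is why $F$ is taken to be concave on the larger ball $B_{2\rho}(p)$ (shrinking $\rho$ relative to the domain of $F$). Concavity along all geodesics in that ball then gives the bound by the minimum of the endpoint values without any first-exit subtlety. We also need $U_t$ connected, or at least that geodesics exist between its points, which holds since it is contained in the working ball. Finally, we note that ``convex'' is used in the sense that minimizing geodesics between points of $U_t$ stay in $U_t$, which is what is needed afterwards to make the closure $\overline{U_t}$ with the restricted metric intrinsic.
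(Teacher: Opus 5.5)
Your proof is correct and follows the paper's argument. In both, concavity of $F$ on the doubled ball keeps $F>t$ along any minimizing geodesic joining two points of the superlevel set, and the bound $\max_{S_\rho(p)}F<t$ prevents such a geodesic from leaving $B_\rho(p)$. The only difference is cosmetic: the paper takes the connected component of $\{F>c\}$ containing $p$ rather than intersecting with $B_\rho(p)$ and running your first-exit argument.
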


\begin{proof}
Choose a sufficiently small working radius $r$ such that
$\overline B_{2r}(p)$ lies in the domain of $F$ and
$F(x)<F(p)$ for $x\in B_{2r}(p)\setminus{p}$.
Set $m_r=\max_{S_r(p)}F$ and choose $m_r<c<F(p)$.
Let $\Omega$ be the component containing $p$ of $\lbrace {F>c\rbrace }$.
Then $\Omega\subset B_r(p)$.

For $x,y\in\Omega$, choose a minimizing geodesic between them inside
$B_{2r}(p)$. By concavity, it lies in $\lbrace {F>c\rbrace }$ and hence in the same
component $\Omega$. Thus $\Omega$ is convex. Since $r$ can be chosen
arbitrarily small, the claim follows.
\end{proof}

The immediate corollary is Proposition~\ref{main theorem}. \par 

If we have Alexandrov regions $(X_i,p_i)$ for $i \in \N$ and $(X,p)$, with working radii of uniform size around $p_i$, then the notion of pointed GH-convergence is defined for these balls. In this situation we use the shorthand notation $(X_i,p_i)\to(X,p)$. 

\begin{cor}[Non-collapsed stability]\label{noncollapsestab}
Let $(X_i,p_i)\to(X,p)$ in the pointed Gromov--Hausdorff topology,
where $\dim X_i=\dim X=n$, and assume a uniform lower bound for the
working radii at $p_i$.

Let $F$ be a strictly concave function constructed above with a strict
local maximum at $p$, and lift the finitely many points occurring in
its construction to $X_i$. Then the corresponding functions $F_i$ are
uniformly strictly concave for all sufficiently large $i$, and $p_i$
admits a convex neighborhood of uniform size.
\end{cor}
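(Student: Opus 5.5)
The plan is to check that every ingredient in the construction of $F$ is an \emph{open} condition in a finite set of data: comparison angles between finitely many points, and one-sided barrier estimates. Such conditions pass to the approximating spaces once the Gromov--Hausdorff distance is small. The one exception is the uniform nonorthogonality statement of Lemma~\ref{pi/2}. There we must control \emph{all} directions at \emph{all} points $x_i$ near $p_i$. This cannot be read off from finitely many comparison angles, and it is the main obstacle.

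Concretely, I fix the data of the construction on $X$:
\begin{itemize}
\item $F=\min_j f_j$ with $f_j=\frac1N\sum_k\phi_M\circ\dist_{q^j_k}$;
\item points $q^j_k\in S_r(p)$, a radius $\varepsilon$, and constants $\xi,\lambda,M$.
\end{itemize}
Let $q^j_{k,i}\in X_i$ be points with $q^j_{k,i}\to q^j_k$, and define $F_i$ by the same formula on $B_\varepsilon(p_i)$. The uniform working radius makes $\dist_{q^j_{k,i}}$ $\lambda'$-concave on $B_\varepsilon(p_i)$ with $\lambda'$ depending only on $\kappa$, $r$ and the distance to $q$. The reason is that semiconcavity of distance functions is controlled by comparison with the model space, given a lower bound on the distance to $q$. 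The key lifting step is then a version of Lemma~\ref{pi/2} on $X_i$: for all large $i$, every $x\in B_\varepsilon(p_i)$ and every $v\in\Sigma_x X_i$ satisfy
\[
\bigl|\tfrac\pi2-\sphericalangle(v,\uparrow_x^{q_{k,i}})\bigr|>\xi
\]
for some $k$. I would prove this exactly as in the proof of Lemma~\ref{pi/2}, replacing infinitesimal statements by comparison angles on $X_i$. Separation of the directions $\uparrow_x^{q_{k,i}}$ follows from $\sphericalangle_\kappa q_{k,i}xq_{l,i}>\delta$. This inequality is an open condition on finitely many distances, so it lifts from $X$ for large $i$ after shrinking $\varepsilon$ slightly. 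The upper packing bound of Lemma~\ref{lem:packing-estimates}(2) depends only on $\dim\Sigma_x X_i-1=n-2$. Here non-collapsing is used: $\dim X_i=n$ together with Corollary~\ref{Cor: Tangent space and space of directions are Alexandrov spaces} gives that $\Sigma_x X_i$ is $(n-1)$-dimensional with curvature $\ge1$, so the constant $C(n-1)$ is uniform. The number $N$ of points was chosen larger than $C\delta^{-(n-2)}$ on $X$ and does not change. Hence the pigeonhole argument goes through verbatim on $X_i$, with the same $\xi$.

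With this in hand, the proof of Proposition~\ref{Prop:Existence of strictly concave functions} applies on $B_\varepsilon(p_i)$ without change. Lemma~\ref{deriv2} and the first variation formula hold in $X_i$. Since $|p_iq_{k,i}|\to r$, the range condition $\dist_{q_{k,i}}(B_\varepsilon(p_i))\subset[r-\frac1{4M},r+\frac1{4M}]$ survives for large $i$ after replacing $\varepsilon$ by $\varepsilon/2$. This gives $(-1)$-concavity of every $f_{j,i}$, hence of $F_i$, uniformly in $i$.

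For the convex neighborhood I avoid differentials at $p_i$ and use only values, following Lemma~\ref{strict-max-convex-neighborhoods}. On $X$, pick $\rho$ small with
\[
\max_{S_\rho(p)}F<c<F(p)-\eta .
\]
The functions $F_i$ converge uniformly to $F$ under the GH approximations, because they are built from distances to converging points. So for large $i$ we get $\max_{S_\rho(p_i)}F_i<c<F_i(p_i)$. The component of $\{F_i>c\}$ containing $p_i$ then lies in $B_\rho(p_i)$. By concavity along geodesics in the working ball, as in Lemma~\ref{strict-max-convex-neighborhoods}, it is convex. It contains $B_{\rho'}(p_i)$ for a $\rho'>0$ independent of $i$: the functions $F_i$ are uniformly Lipschitz and $F_i(p_i)-c>\eta/2$.
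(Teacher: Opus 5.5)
Your proposal is correct and follows the same route as the paper. You lift the finitely many points $q^j_k$ and transfer the separation inequalities of Lemma~\ref{pi/2} to $X_i$. You then use that the equatorial packing bound of Lemma~\ref{lem:packing-estimates}(2) depends only on $\dim\Sigma_x X_i=n-1$, which keeps the same $N$ and $\xi$ and hence gives uniform $(-1)$-concavity. Finally, uniform convergence $F_i\to F$ plus a uniform Lipschitz bound traps a superlevel component of $F_i$ between a fixed ball around $p_i$ and $B_\rho(p_i)$, and that component is convex by Lemma~\ref{strict-max-convex-neighborhoods}.

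Your version is more explicit than the paper's. You transfer separation via continuity of comparison angles and angle comparison in $X_i$, which is what the paper's appeal to lower semicontinuity of angles amounts to, and you check the range condition for $\phi_M$. One bookkeeping point: take $\lambda$ on $X$ to be the model-space semiconcavity constant, so that the same $M$ works in every $X_i$.
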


\begin{proof}
By lower semicontinuity of angles, the separation inequalities used in
Lemma~\ref{pi/2} persist in $X_i$. Since
$\dim\Sigma_x=n-1$ in both the limit and the approximating spaces, the
same equatorial packing estimate applies. Hence the construction gives
a uniform negative concavity bound for $F_i$.

Moreover, $F_i\to F$ uniformly on compact sets. Since $F$ has a strict
maximum at $p$, for some fixed $r,\eta>0$ and all large $i$,
$
F_i(p_i)>\max_{S_r(p_i)}F_i+\eta.
$
The functions $F_i$ have a uniform local Lipschitz bound, so a suitable
superlevel component contains a fixed ball around $p_i$ and is contained
in $B_r(p_i)$. By Lemma \ref{strict-max-convex-neighborhoods} it is convex.
\end{proof}

\iffalse

\fi

%

%	\textcolor{teal}{In view of Definition \ref{Def: Alexandrov region} one does not loose anything, if one considers the induced intrinsic metric $\hat{d}$ instead of the original metric $d$. Therefore we introduce the following convention: Henceforth metric balls $B_r(x)$ are always taken with respect to the induced intrinsic metric $\hat{d}$, i.e.
%	 $$ B_r(p) = \lbrace x \in A : \hat{d}(x,p)<r  \rbrace. $$}
%

\section{Proof of the Quantitative Convex Neighborhood Theorem}

\subsection{Comparison radius estimate}

We first obtain a quantitative version of the local-to-global principle.
Given a point $p \in X$ in an Alexandrov region we want to estimate the size of the largest ball $\overline{B}_r(p)$ around $p$ such that the comparison inequality \eqref{eq: comparison inequality} from \autoref{Def: Lower curvature bound} is satisfied for any quadruple of points.
\begin{defi}[Comparison radius]\label{Def: Comparison radius}
	Denote by $X$ an Alexandrov region. For any $p \in X$, the comparison radius at $p$ is defined to be 	
		     \[
    \operatorname{cr}_\kappa(p) := \sup \left\lbrace r > 0 : 
    \begin{aligned}
        &\text{for  any quadruple } p; a,b,c \in \overline{B}_r(p),\\
        &\text{inequality \eqref{eq: comparison inequality} with constant $\kappa$ holds,}
        \text{ and } \overline{B}_{2r}(p) \text{ is compact}
    \end{aligned}
   \right\rbrace.
    \]
	\end{defi}

\begin{prop}[Comparison radius estimate]\label{Prop: Comparison radius estimate}
There exists universal constant $c>0$ such that, if
$\overline B_r(p)$ is compact and $0<4r\le \varpi^\kappa=\pi/\sqrt\kappa$, then
${cr}_\kappa(p)\ge cr$.
\end{prop}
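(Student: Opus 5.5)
Our strategy is a quantitative version of the globalization theorem (Perelman/Plaut, in the form of \cite[Chapter~10]{MR1835418} or the Alexander--Kapovitch--Petrunin ``radius improvement'' argument). Every point $x$ of the compact ball $\overline B_r(p)$ has some positive comparison radius, since $X$ is an Alexandrov region. We will show that a uniform lower bound propagates. If the comparison inequality holds on all balls of radius $\rho$ centred in a region, then on a slightly smaller region it holds on balls of radius $\frac32\rho$ (or $2\rho$). The loss of region at each step is proportional to $\rho$. Starting from tiny scales and stopping once $\rho\sim cr$, the total loss is a geometric series bounded by a fixed multiple of the final scale. This fits inside the reserve $r$, because $\overline B_r(p)$ is compact.

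Concretely, we define $\rho(x):=\min\{\operatorname{cr}_\kappa(x),\dist(x,X\setminus B_r(p))/C\}$ for a large universal $C$. Compactness of $\overline B_r(p)$ gives local geodesicity on the relevant balls. A standard argument using the semicontinuity of $\operatorname{cr}_\kappa$ along compact sets shows that $\rho$ is positive and lower semicontinuous on $B_r(p)$, hence attains a positive infimum on compact subsets. The key step is the \emph{radius-improvement lemma}. Suppose $x\in B_r(p)$ and the comparison inequality holds for all quadruples in $\overline B_{s}(y)$ for every $y\in B_{Ks}(x)$. Then it holds for quadruples in $\overline B_{\frac32 s}(x)$, provided $Ks\le\varpi^\kappa/4$. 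For its proof we repeat the globalization argument, which uses only points and minimizing geodesics at distance $\le Ks$ from $x$. For a quadruple $x;a,b,c$ with $|xa|$ etc.\ up to $\frac32 s$, we subdivide geodesics $ab$, $bc$, $ca$ into pieces of length $<s/2$. We then use the local comparison at the subdivision points together with the ``hinge/point-on-side'' form of comparison, which is equivalent to \eqref{eq: comparison inequality} locally. This yields the comparison for the whole triangle via Alexandrov's lemma, exactly as in the complete case. The constraint $4r\le\varpi^\kappa$ ensures that all model triangles exist and that Alexandrov's lemma applies with perimeters below $2\varpi^\kappa$.

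With the lemma in hand, we argue by contradiction on the infimum. Let $s_0=\inf_{x\in B_{r/2}(p)}\rho(x)$, and suppose $s_0<cr$ with $c=1/(2K)$, say. Take $x$ nearly attaining this infimum and chosen, via a weighted infimum of $\rho(y)/\dist(y,\partial B_r(p))$, so that neighbours within $Ks_0$ have $\rho\ge s_0$. This weighting is the device that replaces a global induction. The lemma then gives $\operatorname{cr}_\kappa(x)\ge\frac32 s_0$, which contradicts minimality whenever the boundary term is not the active constraint. When it is active, $\rho(x)\ge \dist(x,\partial B_r(p))/C\ge r/(2C)$, which directly gives the claimed bound. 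The compactness clause in Definition~\ref{Def: Comparison radius} (that $\overline B_{2\rho}$ be compact) is automatic once $2\rho<r/2$, because $\overline B_r(p)$ is compact.

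We expect the main obstacle to be the radius-improvement lemma itself. The globalization proof is a delicate induction on the perimeter of triangles, and we must verify that it uses only local data at a controlled multiple $K$ of the scale. In particular, every auxiliary point and geodesic must stay in $B_{Ks}(x)\subset B_r(p)$, so that geodesics exist by compactness and local comparison applies. We would first try to follow \cite[Theorem~10.3.1]{MR1835418} (or Petrunin's ``globalization'' in \cite{CBBlec}) verbatim and record explicit distance bounds at each step. The choice of the weighted infimum, which makes the induction close with universal constants, is the second delicate point.
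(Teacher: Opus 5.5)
Your proposal is correct in outline, but it closes the argument differently from the paper. Both rest on the same radius-improvement step, which the paper also only imports (Lemma~\ref{Lem: Key Lemma}, a localized Key Lemma from the globalization theorem). Your locality factor $K$ and gain $\tfrac32$ play the roles of the paper's $A$ and $K$.

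The paper then runs an explicit iteration. Compactness of $\overline B_r(p)$ gives some uncontrolled $\rho>0$ with $\operatorname{cr}_\kappa\ge\rho$ there. The lemma is applied at scales $\rho,K\rho,K^2\rho,\dots$, shaving off a collar of width $AK^j\rho$ at step $j$. The collars sum to $D_m<AK^m\rho/(K-1)$, so stopping at the first $m$ with $K^m\rho\ge c_0=(K-1)/(2AK)$ keeps $p$ inside and gives $\operatorname{cr}_\kappa(p)\ge c_0 r$.

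You instead use a one-shot extremal (point-picking) argument on $g=\rho/d$, where $d(y)=\dist(y,X\setminus B_r(p))$. This avoids the collar bookkeeping and gives the stronger pointwise bound $\operatorname{cr}_\kappa(y)\ge d(y)/C$ on all of $B_r(p)$. The price is that you need a minimizer of $g$, hence continuity of $\operatorname{cr}_\kappa$; the iteration does not need this.

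Two points need tightening.

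\emph{First}, your contradiction paragraph conflates two infima. A near-minimizer of $\rho$ on $B_{r/2}(p)$ says nothing about neighbours outside $B_{r/2}(p)$. A minimizer $x$ of $g$ does not give $\rho\ge\rho(x)$ on $B_{K\rho(x)}(x)$; it only gives
$\rho(y)\ge g(x)\bigl(d(x)-K\rho(x)\bigr)\ge(1-K/C)\rho(x)$, using $g\le 1/C$. This is still enough:
\begin{itemize}
\item Since $\operatorname{cr}_\kappa(y)\ge\operatorname{cr}_\kappa(x)-|xy|$ and $\operatorname{cr}_\kappa$ is positive on the compact ball, $g\equiv 1/C$ near the boundary. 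Hence $g$ attains its minimum on a compact set.
\item The lemma at scale $(1-K/C)\rho(x)$ gives $\operatorname{cr}_\kappa(x)\ge\tfrac32(1-K/C)\rho(x)>\rho(x)$ once $C>3K$.
\item So $\rho(x)=d(x)/C$, which forces $g\equiv1/C$. Evaluating at $p$ (not at $x$, which may lie far from $p$) gives $\operatorname{cr}_\kappa(p)\ge r/C$.
\end{itemize}

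\emph{Second}, your sketch of the improvement step (subdivide the sides and glue with Alexandrov's lemma) does not work by itself for lower curvature bounds. The triangles joining a vertex to the subdivision points still have long sides, so the local hypothesis never applies to them. You genuinely need the inductive Key Lemma of the globalization theorem, exactly as the paper does.
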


The proof uses the following radius-improvement step, adapted from the
globalization argument (Key lemma) in \cite{zbMATH07802912}.
\iffalse
\begin{lem}[Radius improvement]\label{Lem: Key Lemma}
There exists $K>1$ such that the following holds. Suppose
${cr}_\kappa(x)\ge\rho\le \varpi^\kappa=\pi/\sqrt\kappa$ for every $x\in B_\rho(q)$ and every two points of
$B_\rho(q)$ can be joined by a minimizing geodesic contained in the
ambient compact ball. Then ${cr}_\kappa(q)\ge K\rho$.
\end{lem}

\fi
%QQQQQQQQQQQQQQQQQ

\begin{lem}[Radius improvement]\label{Lem: Key Lemma}
There exist universal constants $A>1$ and $K>1$ with the following
property. Suppose that $\operatorname{cr}_\kappa(x)\ge \rho$ for all
$x\in B_{A\rho}(q)$, and that all minimizing geodesics involved in
$B_{A\rho}(q)$ exist in a fixed compact neighborhood. Then
\[
\operatorname{cr}_\kappa(q)\ge K\rho,
\]
provided the relevant scales are smaller than $\varpi^\kappa$.
\end{lem}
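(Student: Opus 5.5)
We will follow the Key Lemma in the globalization theorem of Petrunin's lecture notes \cite{zbMATH07802912}. The hypothesis gives, at every point $x\in B_{A\rho}(q)$, the comparison inequality \eqref{eq: comparison inequality} for all quadruples in $\overline B_\rho(x)$. We want to upgrade this to quadruples in $\overline B_{K\rho}(q)$ with $K>1$ fixed.

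\textbf{Reformulation.} First rewrite the quadruple condition in the equivalent ``point-on-side'' form. For a triangle $[xyz]$ and a point $w$ on a minimizing geodesic $[yz]$, we need $|xw|\ge|\tilde x\tilde w|$ in the $\kappa$-comparison triangle. The standard local equivalence between the two conditions (Alexandrov's lemma plus the angle formulation) only uses geodesics of the triangle. So it holds inside any ball where minimizing geodesics exist in the fixed compact neighborhood. We will then prove the point-on-side condition for triangles with perimeter at most a fixed multiple of $\rho$ and vertices in $B_{K\rho}(q)$. Choosing $A$ large compared to $K$ ensures that all geodesics between such vertices, and all auxiliary points built below, stay in $B_{A\rho}(q)$, where the hypothesis applies.

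\textbf{Subdivision step.} Consider a triangle with perimeter $\le 2\rho'$, where $\rho'\le \tfrac32\rho$ (say), and assume comparison is already known for all triangles of perimeter $\le\rho$ near any point. Split the side $[yz]$ at its midpoint $m$, and split $[xm]$ further if needed. This reduces the triangle to smaller triangles, each of which is either small (perimeter $\le\rho$, hence handled by the hypothesis at a nearby centre) or thin. The pieces are glued back using Alexandrov's lemma on gluing comparison triangles. This is the content of the ``$\tfrac32$-improvement'' in the Key Lemma: if comparison holds for all triangles of perimeter $<\rho$ in the region, then it holds for perimeter $<\tfrac32\rho$. After one such step, the comparison holds for all quadruples in $\overline B_{K\rho}(q)$ with $K=\tfrac{3}{2}\cdot c$ for a suitable universal factor $c$ relating perimeter to ball radius. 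If that factor eats the gain, iterate the step a fixed number of times; this is where the large constant $A$ is used. Finally we must check that $\overline B_{2K\rho}(q)$ is compact. This holds because $\overline B_{2K\rho}(q)\subset B_{A\rho}(q)$ lies in the fixed compact neighborhood.

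\textbf{Main obstacle.} The hard step is the ``thin triangle'' case of the Key Lemma. There the midpoint-splitting produces a triangle that is not smaller in perimeter. Petrunin handles it by an extremal (minimal-defect) argument over the compact set of triangles. We will use compactness of the fixed neighborhood to pick a triangle maximizing the comparison defect among those of perimeter in $[\rho,\tfrac32\rho]$. We then derive a contradiction by splitting it into triangles of smaller perimeter, which satisfy comparison by extremality or by the hypothesis. The condition $\tfrac32\rho<\varpi^\kappa$, which guarantees that model triangles exist and Alexandrov's lemma applies, is exactly the proviso on scales in the statement. We also need to verify that the extremal triangles and all midpoints used remain inside $B_{A\rho}(q)$; this bookkeeping determines $A$.
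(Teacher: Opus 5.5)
Your overall plan is the paper's. Its proof invokes the Key Lemma \cite[8.35]{zbMATH07802912}, through the equivalence \cite[8.30]{zbMATH07802912} between the local four-point and hinge conditions, and notes that everything stays in the compact neighborhood. Your reformulation, the finite iteration that absorbs the loss between perimeter and ball radius, and the choice of $A$ are that same bookkeeping made explicit. The gap is in the one step you propose to prove yourself: your extremal argument does not settle the thin-triangle case.

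Take the extremal triangle $[xyz]$ and split it at the midpoint $m$ of $[yz]$. The gluing argument based on Alexandrov's lemma needs \emph{exact} comparison in both pieces as input. Together with the local fact that the two angles at $m$ sum to at most $\pi$, this is what yields $\sphericalangle_\kappa xmy+\sphericalangle_\kappa xmz\le\pi$, and hence comparison for $[xyz]$.

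\emph{Maximal defect.} Extremality only tells you that pieces whose perimeter still lies in $[\rho,\tfrac32\rho]$ have defect at most the maximal value $D>0$. Feeding such errors into the gluing gives an error of the same order for $[xyz]$, because the errors at $m$ add up. So there is no contradiction with maximality. This already happens outside the thin case, since the pieces of a triangle of perimeter close to $\tfrac32\rho$ typically still have perimeter $\ge\rho$.

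\emph{Minimal perimeter.} If you instead take a failing triangle of minimal perimeter, the thin piece $[xmz]$ has essentially the same perimeter, as you observe yourself. It may fail as well, and minimality says nothing about it.

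\emph{Iterated subdivision.} Repeating the subdivision toward a degenerate triangle does not help either. The unnormalized defect may tend to zero along the sequence, while degenerate triangles satisfy comparison with equality, so no contradiction arises in the limit.

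Thus ``splitting it into triangles of smaller perimeter, which satisfy comparison by extremality or by the hypothesis'' describes exactly what is unavailable here; the thin case is the real content of the Key Lemma. You have two options. Either use the Key Lemma as a black box and check only that all points and geodesics in its proof stay in $B_{A\rho}(q)$, which is what the paper does. Or reproduce its actual treatment of thin triangles. The sketch you give is not a proof of it.
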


\begin{proof}
This is a localized form of the Key Lemma
\cite[8.35]{zbMATH07802912}. Using the equivalence between the local
four-point and hinge formulations of the CBB condition
\cite[8.30]{zbMATH07802912}, the assumption
$\operatorname{cr}_\kappa\ge\rho$ supplies the small-hinge comparison
required there. After changing the numerical constants, the Key Lemma
increases the comparison scale by a universal factor $K>1$.
Its proof is local, so it applies as long as all points and minimizing
geodesics involved remain in the prescribed compact neighborhood.
\end{proof}

%QQQQQQQQQQQQQQQQQQQQQ

\begin{rem}
The constant in Proposition~\ref{Prop: Comparison radius estimate} is
universal as long as the relevant scales are below $\operatorname{Diam}(\mathbb R_\kappa^2)$. This restriction is harmless
for our applications. Indeed, a lower curvature bound $\kappa$ may
always be replaced by any smaller one; in particular, we may assume
$\kappa\le0$, in which case
$\operatorname{Diam}(\mathbb R_\kappa^2)=\infty$.
\end{rem}
\begin{proof}[Proof of Proposition~\ref{Prop: Comparison radius estimate}]
After rescaling, assume $r=1$. By compactness of $\overline B_1(p)$
and the local curvature condition, there exists $\rho>0$ such that
${cr}_\kappa(x)\ge\rho$ for all $x\in\overline B_1(p)$.

Let $K,A>1$ be given by Lemma~\ref{Lem: Key Lemma}. 
Increasing $A$ if necessary, we may assume $A>3$.
Denote by $\rho>0$ a sufficiently small real number. Then, whenever $q\in B_{1-A\rho}(p)$, one has
$B_{A\rho}(q)\subset B_1(p)$. In particular, any two points of
$B_\rho(q)$ can be joined by a minimizing geodesic contained in
$B_{A\rho}(q)$. \par 

We intend to apply Lemma~\ref{Lem: Key Lemma}
successively. At scale $\rho$ the first application at every $q\in B_{1-As}(p)$ gives 
${cr}_\kappa(q) \ge K\rho$. After removing
from $B_1(p)$ a collar of width $A\rho$, in the next step we work at
scale $K\rho$ and remove an additional collar of width $AK\rho$, and
so on. After $m$ steps we obtain
$$
{cr}_\kappa(q)\ge K^m\rho
\text{ on }
B_{1-D_m}(p),
\qquad \text{ where }
D_m=A  \rho(1+K+\dots+K^{m-1}).
$$

Choose
$c_0=(K-1)/(2AK)$ and let $m$ be the first integer such that
$K^m\rho\ge c_0$. Then $K^m\rho<Kc_0$, and hence
$
D_m<\frac{AK^m\rho}{K-1}
<\frac{AKc_0}{K-1}=\frac12.
$
In particular $p\in B_{1-D_m}(p)$, and therefore
${cr}_\kappa(p)\ge K^m\rho\ge c_0$.
Rescaling back gives $c_\kappa(p)\ge c_0r$.
\end{proof}

\subsection{Local compactness estimates}

For a metric space $Z$, let $\beta(\varepsilon,Z)$ denote the minimal
cardinality of an $\varepsilon$-net in $Z$.

\begin{lem}[Small-scale estimates]\label{lem: Uniform bounds on nets on small scales}
There exist $c=c(n,\kappa)>0$ and $C=C(n,\kappa)>0$ such that the
following holds. If $X^n$ is an Alexandrov region,
$\overline B_r(p)$ is compact, and $0<r\le1$, then for
$0<\varepsilon\le\rho\le cr$,
$$
\beta(\varepsilon,B_\rho(p))
\le C\left(\frac{\rho}{\varepsilon}\right)^n,
\qquad
\mathcal H^n(B_\rho(p))
\le V_{\kappa,n}(\rho)\le C\rho^n.
$$
Here $V_{\kappa,n}(\rho)$ denotes the volume of a ball of radius $\rho$ in the comparison space of dimension $n$ and constant curvature $\kappa$.
\end{lem}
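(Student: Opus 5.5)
The plan is to reduce everything to the comparison radius estimate, and then apply the standard Bishop--Gromov volume comparison inside a ball where comparison holds and geodesics exist.

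\emph{Step 1: comparison on a fixed ball.} We may assume $\kappa\le 0$, replacing $\kappa$ by $\min\{\kappa,0\}$; this only worsens the constants. Proposition~\ref{Prop: Comparison radius estimate} gives $\operatorname{cr}_\kappa(p)\ge c_1 r$ with a universal $c_1$. Moreover, every point $x\in B_{r/2}(p)$ has $\overline B_{r/2}(x)\subset\overline B_r(p)$ compact, so the same proposition gives $\operatorname{cr}_\kappa(x)\ge c_1 r/2$ uniformly on $B_{r/2}(p)$. Set $c=c_1/20$ and $\rho\le cr$. Then the ball $B_{4\rho}(p)$ lies well inside the region where four-point comparison holds around each of its points. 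Any two points of $\overline B_{2\rho}(p)$ are joined by minimizing geodesics inside $\overline B_{4\rho}(p)$, by compactness and intrinsicness. In particular, $r'=\rho$ is a working radius at $p$, with constants depending only on $n$ and $\kappa$.

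\emph{Step 2: volume bound.} Fix $x=p$. Using the local radial contraction (the gradient-exponential/log map onto $\operatorname{Cone}_\kappa(\Sigma_p)$, as in the proof of Corollary~\ref{Cor: Tangent space and space of directions are Alexandrov spaces}), the ball $B_\rho(p)$ is the image of the $\rho$-ball in the $\kappa$-cone over $\Sigma_p$ under a $1$-Lipschitz map. Alternatively, one can argue as in \cite[Lemma~10.6.2]{MR1835418}, with all geodesics inside the working ball. By Corollary~\ref{Cor: Tangent space and space of directions are Alexandrov spaces}, $\Sigma_p$ is an $(n-1)$-dimensional Alexandrov space with curvature $\ge1$, so $\mathcal H^{n-1}(\Sigma_p)\le \mathcal H^{n-1}(S^{n-1})$. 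This can be seen via the noncontracting map onto $S^{n-1}$ used in Lemma~\ref{lem:packing-estimates}, or via the packing bound. Integrating in polar coordinates then gives $\mathcal H^n(B_\rho(p))\le V_{\kappa,n}(\rho)\le C\rho^n$ for $\rho\le1$. The same bound holds for every center $y\in B_{\rho}(p)$, with $\rho$ replaced by any radius at most $\rho$.

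\emph{Step 3: net bound.} For the count $\beta$, take a maximal $\varepsilon$-separated set $T\subset B_\rho(p)$; it is an $\varepsilon$-net. The balls $B_{\varepsilon/2}(y)$, $y\in T$, are disjoint and contained in $B_{2\rho}(p)$. It then suffices to have a relative (Bishop--Gromov type) lower bound
\[
\mathcal H^n(B_{\varepsilon/2}(y))\ge c'\Bigl(\frac{\varepsilon}{\rho}\Bigr)^n\mathcal H^n(B_{3\rho}(y)),
\qquad
B_{2\rho}(p)\subset B_{3\rho}(y).
\]
With this bound, summing over $T$ gives $\#T\le C(\rho/\varepsilon)^n$.

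The main obstacle is this relative volume comparison without completeness and without an a priori volume lower bound. The plan is to prove it by the standard argument: radial contraction toward $y$ along minimizing geodesics, mapping $B_{3\rho}(y)$ onto $B_{\varepsilon/2}(y)$, with Lipschitz constant controlled by $\kappa$-comparison (\cite[Theorem~10.6.6]{MR1835418}). The only point to check is that all geodesics from $y$ to points of $B_{3\rho}(y)$ lie in the comparison region; this holds since $3\rho+\rho<c_1r/2$. If instead one wants to avoid Hausdorff measure, the count can be done via packing in $\Sigma_y$ and the cone structure. Uniformity in $p$ and $X$ comes solely from Step 1.
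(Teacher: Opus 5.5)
Your proof is correct in outline, but for the net bound it takes a genuinely different route from the paper.

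\textbf{The paper's route.} The paper uses a single map. By angle comparison, $\log_p^\kappa\colon B_{cr}(p)\to\cone_\kappa(\Sigma_p)$ is distance-nondecreasing. The noncontracting map $\Sigma_p\to S^{n-1}$ extends radially to a noncontracting map $\cone_\kappa(\Sigma_p)\to\mathbb R^n_\kappa$. The composite therefore sends $B_\rho(p)$ noncontractingly into the model $\rho$-ball. Such a map preserves $\varepsilon$-separation and cannot decrease $\mathcal H^n$. So the net bound and the volume bound follow at once, with no relative volume comparison, no positivity of $\mathcal H^n$, and no polar-coordinate formula for Hausdorff measure on cones.

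\textbf{What your route buys and costs.} Your Step 3 proves something stronger: a Bishop--Gromov type doubling estimate at every center $y\in B_\rho(p)$. It needs extra inputs that you should make explicit.
\begin{enumerate}
\item Let $\Phi$ be the radial contraction of $B_{3\rho}(y)$ \emph{into} (not onto) $B_{\varepsilon/2}(y)$. What you need is the lower distortion bound $|\Phi(x)\Phi(x')|\ge c(\kappa)\,t\,|xx'|$ with $t=\varepsilon/(6\rho)$, which comes from monotonicity of comparison angles. A Lipschitz bound for a surjection, as you phrase it, would give the volume inequality in the wrong direction.
\item Dividing by $\mathcal H^n(B_{2\rho}(p))$ requires this quantity to be positive. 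In an Alexandrov region that needs an argument, e.g.\ the bi-Lipschitz strainer chart of Theorem~\ref{Theorem: Bilipschitz Homeo around strainer} combined with dimension homogeneity.
\end{enumerate}

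\textbf{Smaller points in Steps 1 and 2.}
\begin{enumerate}
\item The noncontracting $\log_p^\kappa$ already suffices in Step 2. You do not need the gradient-exponential map, whose gradient-flow construction is not localized anywhere in the paper.
\item The radial extension of $f$ gives the cone-ball volume bound directly, replacing your polar-coordinates step.
\item For $\kappa>0$, your reduction to $\min\{\kappa,0\}$ only yields $V_{0,n}(\rho)$, not $V_{\kappa,n}(\rho)$. Keep $\kappa$ and apply the comparison radius estimate at radius $\min\{r,\varpi^\kappa/4\}$ instead.
\end{enumerate}
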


\begin{proof}
By Proposition~\ref{Prop: Comparison radius estimate}, after decreasing
$c$ if necessary, comparison holds throughout $B_{cr}(p)$.
Moreover, $\Sigma_p$ is an $(n-1)$-dimensional Alexandrov space, and
there exists a distance-nondecreasing map
$f:\Sigma_p\to S^{n-1}$; see \cite[Proposition 10.6.10]{MR1835418}.

Angle comparison implies that
$\log_p^{\kappa}:B_{cr}(p)\to\cone_\kappa(\Sigma_p)$ is distance-nondecreasing.
The map $f$ extends radially to a distance-nondecreasing map
$\cone_\kappa(\Sigma_p)\to\R_\kappa^n$. Hence $B_\rho(p)$ admits a
distance-nondecreasing map into the model ball of radius $\rho$.
The packing and volume estimates in the statement follow immediately.
\end{proof}

The next lemma turns these local estimates into the covering bound needed for Gromov precompactness.
\begin{lem}[Local-to-global net bound]\label{Local to global bound}
Let $(X,d)$ be a metric space, $p_0\in X$, and $L<L_0$. Suppose that
for some $R>0$ all balls $B_s(x)$ with
$x\in B_{L_0}(p_0)$ and $s\le R$ satisfy a uniform bound
$\beta(\varepsilon,B_s(x))\le f(\varepsilon,s)$.
Assume also that every point of $B_{L_0}(p_0)$ can be joined to $p_0$
by a minimizing geodesic. Then
$\beta(\varepsilon,B_L(p_0))$ is bounded in terms of
$f,R,$ and $L$ only.
\end{lem}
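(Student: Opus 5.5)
The plan is a chaining argument along geodesics from $p_0$. Fix $\varepsilon>0$ and set $s:=\min\{R, L_0-L\}/2$, or any fixed positive scale not exceeding $R$ and smaller than $L_0-L$. Since all points of $B_L(p_0)$ are joined to $p_0$ by minimizing geodesics, every point $x\in B_L(p_0)$ can be reached from $p_0$ by a chain $p_0=x_0,x_1,\dots,x_k=x$ of points on that geodesic. We take $|x_{j}x_{j+1}|\le s/2$ and $k\le \lceil 2L/s\rceil=:k_0$. All $x_j$ lie in $B_L(p_0)\subset B_{L_0}(p_0)$, so the hypothesis applies to balls centered at them with radius $s\le R$.

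The key step is an inductive covering. Let $N_0=\{p_0\}$. Given a finite set $N_j$ which is an $(s/4)$-net of the set $Y_j$ of all $j$-th chain points, apply the hypothesis to each ball $B_s(y)$, $y\in N_j$, with radius $s/4$ in place of $\varepsilon$. This gives at most $f(s/4,s)$ points per ball. Let $N_{j+1}$ be the union of these nets, so $\#N_{j+1}\le \#N_j\, f(s/4,s)$. Any $(j+1)$-st chain point $x_{j+1}$ satisfies $|x_{j+1}y|\le |x_{j+1}x_j|+|x_jy|< s/2+s/4<s$ for a suitable $y\in N_j$, hence lies in $B_s(y)$ and is $s/4$-close to $N_{j+1}$. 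After $k_0$ steps (we pad chains by repeating endpoints so all have length $k_0$), $B_L(p_0)$ is covered by at most $f(s/4,s)^{k_0}$ balls $B_s(y)$ with $y\in B_{L_0}(p_0)$.

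Finally, cover each such ball $B_s(y)$ by an $\varepsilon$-net of cardinality at most $f(\varepsilon,s)$. This yields
\[
\beta(\varepsilon,B_L(p_0))\le f(s/4,s)^{\lceil 2L/s\rceil}\, f(\varepsilon,s),
\]
which depends only on $f,R,L$ (and the gap $L_0-L$ through $s$; if one wants dependence on $R,L$ only, take $s=R$ and note that the chain points stay in $B_L(p_0)$ anyway, so only $s\le R$ is needed).

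The main obstacle is bookkeeping about where the net centers lie. The hypothesis is only stated for balls centered in $B_{L_0}(p_0)$, so we must center the nets only at points we know lie there. We therefore choose net points inside the balls being covered, i.e.\ nets consisting of points of the ball itself. Such nets are always in $B_{L_0}(p_0)$ provided the balls $B_s(y)$ we use have $y$ on chain-accessible positions within $B_L(p_0)$. If a net point falls slightly outside $B_L(p_0)$, we replace it by a nearby chain point, at the cost of doubling the net radius. This is harmless after halving $s$.
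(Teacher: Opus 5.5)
Your proposal is correct and is essentially the paper's argument. Both run an inductive covering along minimizing geodesics from $p_0$, in which each of $O(L/R)$ steps multiplies the number of centers by a fixed factor such as $f(R/2,R)$, followed by a final refinement to scale $\varepsilon$; the paper indexes the induction by the concentric balls $B_{R+jR/2}(p_0)$ rather than by positions along chains.

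One caveat: replacing net points by nearby $(j+1)$-st chain points, and discarding those near no chain point, must be part of the inductive step from the outset, not an afterthought. Without it the centers in $N_j$ may lie at distance up to $js$ from $p_0$ and leave $B_{L_0}(p_0)$ for any choice of $s$. With it, all centers stay in $B_L(p_0)$, and taking $s=R$ gives a bound depending only on $f,R,L$.
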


\begin{proof}
Put $M=f(R/2,R)$. We first construct a uniform $R/2$-net on larger
balls. We claim that for every $j\ge0$ with
$R+jR/2<L_0$,
$$
\beta\bigl(R/2,B_{R+jR/2}(p_0)\bigr)\le M^{j+1}.
$$
For $j=0$ this follows directly from the assumption.

Suppose the estimate holds for some $j$, and let
$x_1,\dots,x_N$ be an $R/2$-net of $B_{R+jR/2}(p_0)$ with
$N\le M^{j+1}$. If
$y\in B_{R+(j+1)R/2}(p_0)$, take a minimizing geodesic from $p_0$ to
$y$. A point of this geodesic at distance at most
$R+jR/2$ from $p_0$ lies within $R/2$ of some $x_i$. Hence
$y\in B_R(x_i)$ for some $i$, and therefore
$$
B_{R+(j+1)R/2}(p_0)\subset\bigcup_{i=1}^N B_R(x_i).
$$
Each ball $B_R(x_i)$ admits an $R/2$-net with at most $M$ points.
Consequently
$$
\beta\bigl(R/2,B_{R+(j+1)R/2}(p_0)\bigr)
\le M^{j+2}.
$$

Thus every ball $B_L(p_0)$, $L<L_0$, admits an $R/2$-net whose
cardinality is bounded in terms of $f,R$, and $L$ only. Finally,
cover $B_L(p_0)$ by the corresponding $R/2$-balls and replace each of
them by an $\varepsilon$-net of cardinality at most
$f(\varepsilon,R/2)$. This gives the required bound
$\beta(\varepsilon,B_L(p_0))\le F(\varepsilon,L)$.
\end{proof}

\subsection{Proof of the main theorem}

\begin{proof}[proof of \autoref{quantitative theorem}.]
Suppose the conclusion fails. 
By decreasing the lower curvature bound if necessary, we may assume
$\kappa<0$. After rescaling each ball to radius $1$, the same lower
curvature bound is preserved since $r_i\le1$.

After a normalization depending only on
$\kappa$ and rescaling, we obtain Alexandrov regions $X_i^n$ and points
$p_i$ such that $\overline B_1(p_i)$ is compact,
$\mathcal H^n(B_{1/2}(p_i))\ge v_0$, but no convex Alexandrov
neighborhood of $p_i$ contains $B_{1/i}(p_i)$.

For $x\in B_{19/20}(p_i)$, a fixed ball around $x$ is contained in
$B_1(p_i)$. Lemma~\ref{lem: Uniform bounds on nets on small scales}
therefore gives uniform local covering bounds. Minimizing geodesics from
$p_i$ to points of $B_{19/20}(p_i)$ exist inside $B_1(p_i)$ by
compactness and intrinsicness. Lemma~\ref{Local to global bound} now
gives uniform covering bounds for $B_{9/10}(p_i)$. Hence we can apply the classical Gromov's precompactness theorem, after passing
to a subsequence,
$
(\overline B_{9/10}(p_i),p_i)\longrightarrow(Y,p_\infty)
$
in the pointed Gromov--Hausdorff topology.

Set $B=B_{1/2}(p_\infty)$. We claim that, with its induced intrinsic
metric, $B$ is an Alexandrov region. Indeed, for $y\in B$ and
approximating points $y_i\to y$, Proposition~\ref{Prop: Comparison radius estimate}
gives a uniform comparison radius around $y_i$. Minimizing geodesics
between sufficiently close points remain in a fixed compact inner ball
and therefore subconverge to minimizing geodesics in $Y$. Thus the
restricted metric on $B$ is locally geodesic, so it agrees locally with
its intrinsic metric, and the comparison inequality passes to the
limit.

We next show that $\dim B=n$. First, $\dim B\le n$: otherwise an
$(n+1)$-strainer in $B$ would lift to $X_i$ for all large $i$.

For the opposite inequality, Lemma~\ref{lem: Uniform bounds on nets on small scales}
gives, uniformly in $i$,
$\mathcal H^n(B_\varepsilon(x))\le C\varepsilon^n$ for points in the
relevant inner ball. Hence
$$
v_0\le\mathcal H^n(B_{1/2}(p_i))
\le C\varepsilon^n \cdot \beta(\varepsilon,B_{1/2}(p_i)),
$$
and therefore
$\beta(\varepsilon,B_{1/2}(p_i))\ge c v_0\varepsilon^{-n}$.
Using the standard relation between covering and packing numbers and
passing to the limit, we obtain
$\operatorname{pack}_\varepsilon(B_{3/5}(p_\infty))
\ge c'\varepsilon^{-n}$, after changing constants.

If $\dim B=m<n$, the small-scale estimate applied in $B$ gives instead
$\operatorname{pack}_\varepsilon(B_{3/5}(p_\infty))
\le C'\varepsilon^{-m}$, a contradiction as $\varepsilon\to0$.
Thus $\dim B=n$.

We may therefore apply Corollary~\ref{noncollapsestab}. It gives
convex neighborhoods of $p_i$ containing a ball of radius
$\rho>0$ independent of $i$, contradicting the choice of the sequence.
\end{proof}

\iffalse

\fi
%\cite{CBBlec}
\bibliography{Bibliography}

\end{document}